\theoremstyle{plain}
\newtheorem{thm}{Theorem}[section]
\newtheorem{cor}[thm]{Corollary}
\newtheorem{lem}[thm]{Lemma}
\newtheorem{conj}[thm]{Construction}
\theoremstyle{definition}
\theoremstyle{remark}
\newtheorem{rem}[thm]{Remark}
\newtheorem{defn}[thm]{Definition}
\newtheorem{notn}[thm]{Notation}
\let\runauthor\@author
\let\runtitle\@title
\begin{document}
	
	\title{Gröbner Fans and Minimal Embedded Toric Resolutions of Rational  Double Point Singularities} 
	
	\author{B\"u{\c s}ra KARADENİZ {\c S}EN}
	
	\subjclass[2020]{14B05, 14M25, 32S45}
	
	

	\begin{abstract}

In \cite{hc}, the authors give minimal embedded toric resolutions of $ADE$-singularities in $\mathbb{C}^3$ by constructing regular refinements of their dual Newton polyhedrons with the elements of their embedded valuation sets derived from the jet schemes constructed in \cite{hussein}. On the other hand in  \cite{AGS} and \cite{ar-hu}, the authors represent the Gröbner fan of a Newton non-degenerate variety and prove that a regular refinement of the Gröbner fan of such a singularity yields an embedded toric resolution. In this paper, we reconstruct embedded toric resolutions of $ADE$-singularities: We give the explicit constructions of  their Gröbner fans and refine them using the concept of profile.

	\end{abstract}
	\maketitle
 
\section{Introduction}

\noindent Let $X\subset \mathbb{C}^n$ be a variety and let $Sing(X)$ be its singular locus. A resolution of singularities of $X$ is a proper birational morphism $\pi: \tilde{X}\to X$ such that $\tilde{X}$ is a non-singular variety and $\tilde{X} \backslash \pi^{-1}(Sing(X))\cong X\backslash Sing(X)$. By  \cite{hironaka}, it is known that $X$ admits a resolution. 

\noindent J. Nash introduced in \cite{Nash} the arc space $J_{\infty}(X):=\{\gamma: Spec \mathbb{C}[[t]]\to X\}$ of $X$ and he showed that the number of irreducible components of $J_{\infty}(X)$ passing through $Sing(X)$ is at most the number of essential irreducible components of the exceptional fibre of a resolution. The conjecture the equality holds is called Nash Problem. In dimension $2$, means for surfaces, the Nash Problem was topologically solved with positive answer in \cite{bobadilla} by J. Fernandez Bobadilla and M. Pe Pereira. Later, it was algebraically solved in \cite{dedo} by T. De Fernex and R. De Campo. In higher dimension, the Nash Problem had positive answer for some special cases (see \cite{Fernex, IK, plenat-pampu} for more detail). On the other hand, the $m$-th jet scheme $J_{m}(X):=\{\gamma_m: Spec \frac{\mathbb{C}[[t]]}{\langle t^{m+1} \rangle}\to X\}$ of $X$ is the set of truncated arcs. The arc space $J_{\infty}(X)$ of $X$ may be seen as the limit of the jet scheme $J_m(X)$ of $X$ \cite{Fernex2}. After the studies over Nash Problem, in $2013$, M. Lejeune-Jalabert, A. Reguera and H. Mourtada introduced in the Inverse Nash Problem which is \textquotesingle Given the jet scheme $J_m(X)$ of $X$, can we construct a resolution of singularities of $X$?\textquotesingle. In \cite{hc}, it was answered by C. Plénat and H. Mourtada for a special class of singularities which is called rational double point singularities ($ADE$-singularities). They constructed minimal embedded toric resolutions of them by using their jet schemes given in \cite{hussein} where the authors gave the jet schemes of $ADE$-singularities and provided the embedded valuation sets which are the set of vectors coming from the some special irreducible components of the jet schemes. Moreover, in \cite{hussein}, one can find a relation between the number of irreducible components of jet schemes passing through the singular locus and the number of exceptional curves on the minimal resolution of the singularities. There is a one to one correspondence between them.

\noindent Furthermore in \cite{koreda}, Y. Koreeda focused on the relation between the jet schemes of $ADE$-singularites and their minimal resolution graphs. He proved that a graph obtained by the jet schemes is isomorphic to the minimal resolution graph for $A_n$-singularities and $D_4$-singularity \cite{koreda}. For the other members of $ADE$-singularities, showing this is still an open question. Despite this, it is a significant progress since it is obvious that there is a relation between the irreducible components of jet schemes and the minimal resolution graph of the singularities.

\noindent After all these studies, in this paper we give a different point of view of the resolution of singularities for $ADE$-singularities. Mainly we focus on the resolutions by constructing regular refinements of the Gröbner fans of them.

 \noindent In $1970$'s, Newton non-degenerate hypersurface singularities were introduced by A. G. Khovanskii and A. G. Kouchnirenko. There is a guarantee on a Newton non-degenerate hypersurface singularity has a toric resolution \cite{khovanskii, kouch, oka, Varc}. As we mentioned before, in \cite{hc}, the authors constructed resolutions for $ADE$-singularities which are Newton non-degenerate hypersurface singularities by giving regular refinements of their dual Newton polyhedrons. On the other hand, in \cite{ar-hu}, the authors proved that the Gröbner fan is a polyhedral fan and a variety defined by Newton non-degenerate ideal on a toric variety with any characteristic of the base field admits an embedded toric resolution of singularities by constructing regular refinements of their Gröbner fans.
\noindent The motivation of this article is the following theorem:
\begin{thm}
\cite{AGS, ar-hu} Let $I\subset \mathbb{C}[x_1,x_2,\ldots,x_n]$ be a Newton non-degenerate ideal and let $X=V(I)\subset X_{C}$ where $X_{C}$ is the toric variety associated to the cone $C$. Let $\Sigma$ be a regular refinement of $C$ which is compatible with Gröbner fan of $X$. Then the associated toric morphism $\pi_{\Sigma}: X_{\Sigma}\to X_{C}$ is a proper birational morphism and the irreducible components of the total transform $\pi_{\Sigma}^{-1}$ are smooth and meet transversally.      
\end{thm}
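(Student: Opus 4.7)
The plan is to organize the argument around two structural inputs: the regular refinement, which controls the ambient toric geometry, and the combination of Newton non-degeneracy with compatibility with the Gröbner fan, which controls the defining equations. First, since $\Sigma$ is a regular refinement of $C$, standard toric geometry immediately yields that $X_\Sigma$ is smooth and that the induced toric morphism $\pi_\Sigma\colon X_\Sigma\to X_C$ is proper and birational: regularity means every maximal cone $\sigma\in\Sigma$ is generated by part of a $\mathbb{Z}$-basis of the lattice, and the map is an isomorphism over the open torus. At this stage $I$ plays no role, and it suffices to analyze the total transform $\pi_\Sigma^{-1}(X)$ chart by chart on the affine opens $U_\sigma\subset X_\Sigma$.

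Next I would fix a maximal cone $\sigma\in\Sigma$ with generating basis $v_1,\ldots,v_n$, introduce the associated monomial coordinates $y_1,\ldots,y_n$ on $U_\sigma$, and write each original coordinate $x_i$ as an explicit monomial in the $y_j$. Pulling back any $f\in I$ then gives a factorization $\pi_\Sigma^*(f)=y^{a(f,\sigma)}\cdot\tilde f_\sigma$, where $y^{a(f,\sigma)}$ is the monomial carrying the minimum $\sigma$-weight among the terms of $f$. Here the compatibility hypothesis is essential: the relative interior of $\sigma$ lies inside a single maximal cone of the Gröbner fan of $X$, so the initial ideal $\mathrm{in}_w(I)$ is constant for $w\in\mathrm{relint}(\sigma)$, call it $\mathrm{in}_\sigma(I)$. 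Consequently, the restriction of the ideal $(\tilde f_\sigma:f\in I)$ to the stratum of $U_\sigma$ cut out by the exceptional variables is precisely $\mathrm{in}_\sigma(I)$ written in the $y$-coordinates. The total transform in $U_\sigma$ therefore splits into exceptional toric divisors $\{y_j=0\}$ — already smooth and normal-crossing by regularity of $\sigma$ — together with a strict transform cut out by the $\tilde f_\sigma$.

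The main technical step, and the one I expect to be the principal obstacle, is proving that this strict transform is smooth and meets the exceptional divisors transversally. This is exactly where Newton non-degeneracy enters: by definition, for every weight $w$ the initial ideal $\mathrm{in}_w(I)$ cuts out a subscheme of the open torus whose intersection with every coordinate stratum is smooth and reduced, equivalently a suitable Jacobian attains maximal rank along each stratum. Translating this into $U_\sigma$ yields stratumwise smoothness of the strict transform, and a direct application of the Jacobian criterion then lifts this to honest smoothness of the strict transform together with transversality along each $\{y_j=0\}$. Patching over all maximal cones $\sigma\in\Sigma$ produces the global conclusion.

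The delicate bookkeeping point on which I would spend the most effort is precisely this last step: checking that the Jacobian of the transforms $\tilde f_\sigma$ inherits the expected rank from $\mathrm{in}_\sigma(I)$ \emph{uniformly across all} torus-invariant strata of $U_\sigma$ simultaneously. This is where the precise definition of Newton non-degeneracy for an arbitrary (not necessarily principal) ideal, as formulated in \cite{ar-hu}, has to be invoked, and it is the reason the principal hypersurface case relevant to the $ADE$-singularities of \cite{hc} is strictly easier than the general statement.
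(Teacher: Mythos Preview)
The paper does not give its own proof of this theorem: it is stated in the Introduction with the attribution \cite{AGS, ar-hu} and is used only as motivation for the explicit constructions that follow. There is therefore nothing in the paper to compare your argument against.

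That said, your outline is essentially the standard route taken in the cited references. The decomposition into (a) properness and birationality from the regular refinement, (b) chart-by-chart analysis via the monomial change of coordinates attached to each maximal cone $\sigma$, (c) constancy of the initial ideal on $\mathrm{relint}(\sigma)$ from Gr\"obner-fan compatibility, and (d) the Jacobian/transversality step from Newton non-degeneracy, is exactly how the argument is organized in \cite{AGS} for hypersurfaces and in \cite{ar-hu} for ideals. You have also correctly flagged the genuinely delicate point: in the non-principal case one must use the formulation of Newton non-degeneracy for ideals from \cite{ar-hu}, and verify the rank condition uniformly across all torus-invariant strata of $U_\sigma$, not just the open torus. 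For the purposes of the present paper only the principal hypersurface case is needed, where your sketch is entirely adequate and indeed matches the classical arguments of \cite{khovanskii, oka, Varc}.
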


 \noindent More precisely, here following \cite{AGS} and \cite{ar-hu} we give the explicit constructions of Gröbner fans of $ADE$-singularities. Then we provide minimal embedded toric resolutions of them by constructing regular refinements of their Gröbner fans. To do this, we look at the profile of each maximal dimensional Gröbner cones in their Gröbner fans and we use the vectors living inside the profile. The concept of profile was introduced in \cite{cg}. Then in \cite{bcm} the authors extended the definition to non-simplicial cones and used it to show the minimality of the resolutions for a class of singularities which is called rational triple point singularities ($RTP$-singularities). Thus the idea of using the profile to find suitable vectors for a regular refinement comes from the studies in \cite{bcm}.
 
\noindent This study is inspired by the works of C. Plénat and H. Mourtada. The reader will find the reconstructed version of the work from \cite{hc} using the Gröbner fan. What distinguishes this work is that we construct minimal embedded toric resolutions of $ADE$-singularities, a fundamental class of singularities, via their Gröbner fans, rather than the classical method of utilizing their dual Newton polyhedrons. Also the reader will find the brief reminder of the works in \cite{koreda, hussein}. All these come together, this survey provides a broad perspective on the resolutions of $ADE$-singularities.

\noindent This article is organized as follows: We start with the profile of a cone in Section $2$. The reader can be also find some basic definitions in this section. The Section $3$ is devoted to explain Newton non-degeneracy and the notion of Gröbner fan. Then in Section $4$, we give minimal embedded toric resolutions of $ADE$-singularities. In each sub-section, one can find the detailed computations for each of the sub-classes. 

\section{Profile of a Cone}
\noindent Let $v_1,v_2,\ldots,v_r$ be the vectors in $\mathbb{Z}^n$. A rational polyhedral cone in $\mathbb{R}^n$ generated by $v_1,v_2,\ldots,v_r$ is the set $$C:=\langle v_1,v_2,\ldots,v_r \rangle =\{ \lambda_1 v_1+\lambda_2 v_2+\ldots+ \lambda_r v_r \mid \lambda_i \in \mathbb{R}_{\geq 0}, i=1,2,\ldots,r \}$$ A rational polyhedral cone is called strongly convex if it does not contain any linear subspace different from $\{ \textbf{0} \}$. A vector is said to be primitive if all its coordinates are relatively prime. From now on, we assume that all cones $C$ are strongly convex with primitive generators. 

\noindent  Let us associate with $C=\langle v_1,v_2,\ldots,v_r \rangle \subset \mathbb{R}^n$ the matrix $M_{C}:=[v_1 \ v_2 \ldots v_r]$ where the colums of $M_{C}$ are $v_{i}$'s. Consider the $r\times r$ minors $M_r$ of $M_{C}$. The determinant of $C$ is $$det(C):=gcd(det(M_r))$$
\begin{defn}
A cone $C \subset \mathbb{R}^n$ is called regular if $det(C)=\pm 1$. Otherwise $C$ is called non-regular.  
\end{defn}

\noindent \cite{cox} A collection of cones $\Sigma\subset \mathbb{R}^n$ is called a fan if 

$i)$ Each face of a cone in $\Sigma$ is a cone in $\Sigma$.

$ii)$ The intersection of any two cones $C_1, C_2 \in \Sigma$ is a face for both $C_1$ and $C_2$.

\noindent A fan $\Sigma\subset \mathbb{R}^n$ is called regular if each cone in $\Sigma$ is regular. 
\vskip.3cm
\noindent If $C\subset \mathbb{R}^n$ is a non-regular cone then one can obtain a regular sub-cones from $C$ by adding vectors. This process is called regular refinement of $C$. Each non-regular cone has a regular refinement \cite{fulton}. 

 \noindent To construct a regular refinement of a non-regular cone $C$, we use the concept of profile introduced in \cite{cg}. The profile of a cone $C$ is a specific bounded region. Here we count the vectors with integer entries living inside the profile of $C$. Specifically, we look at the vectors from the intersection of the profile of $C$ and $\mathbb{Z}^n$.
\begin{defn}
\cite{cg} The primitive vector over $1$-dimensional face of $C$ is called extremal vector.  
\end{defn}
\noindent For $C=\langle v_1,v_2,\ldots, v_r \rangle \subset \mathbb{R}^n$, if $n=r$ then we say the cone $C$ is simplical. In \cite{cg}, the authors give the definition of profile for a simplicial cone. In \cite{bcm}, the authors extend the definition of profile to non-simplicial cones. Here our purpose is to use vectors inside the profiles of the maximal dimensional Gröbner cones of $X$ to construct an embedded toric resolution where $X$ corresponds to each of the members of $ADE$-singularities.

\begin{defn}
\cite{bcm} The profile $p_{C}$ of a cone $C=\langle v_1,v_2,\ldots,v_r \rangle \subset \mathbb{R}^n$ is the convex hull such that its extremal vectors are exactly $v_1,v_2,\ldots,v_r$.      
\end{defn}

\noindent The extremal vectors of $C$ are on the hyperplanes which are called the boundary of $p_{C}$. The case $C$ is simplicial, all extremal  vectors are on a unique hyperplane. The case $C$ is non-simplicial, the boundary is the union of hyperplanes. It may happen that though $C$ is a non-simplicial cone, all extremal vectors are on a unique hyperplane as in the case of rational double point singularities.

\section{Newton Non-Degeneracy and Gröbner Fan}
\noindent Let $X$ be a hypersurface defined by $$f(x_1,x_2,\ldots,x_n)=\sum_{\pmb{\alpha} \in \mathbb{Z}^n, c_{\pmb{\alpha}}\in\mathbb{C}} c_{\pmb{\alpha}}x_{1}^{\alpha_1}x_{2}^{\alpha_2}\cdot \cdot \cdot x_{n}^{\alpha_n} $$ where $\pmb{\alpha}=(\alpha_1,\alpha_2,\ldots,\alpha_n)$. The support set of $f$ is $$supp(f):=\{\pmb{\alpha}=(\alpha_1,\alpha_2,\ldots,\alpha_n)\in \mathbb{Z}^n \mid c_{\pmb{\alpha}}\neq 0  \}$$ The Newton polyhedron of $f$ is the convex hull given as  $$NP(f):=conv\{ \pmb{\alpha}+(\mathbb{R}_{\geq 0})^n \mid \pmb{\alpha}\in supp(f) \}$$ Let $F$ be a face of $NP(f)$. The restriction of $f$ to the set $F\subset \mathbb{Z}^n$ is $$f|_{F}:=\sum_{\pmb{\alpha}\in supp(f)\cap F\subset \mathbb{Z}^n} c_{\pmb{\alpha}} \pmb{x}^{\pmb{\alpha}}$$

\begin{defn}
\cite{AGS, oka} A hypersurface singularity $X:=V(f)\subset \mathbb{C}^n$ is called Newton non-degenerate if for each face $F$ of $NP(f)$, the hypersurface $V(f|_{F})$ has no singularities in $(\mathbb{C}^{*})^n$. 
\end{defn}

\noindent Newton non-degenerate hypersurface singularities have been studied remarkably since we know that one may construct embedded toric resolutions of singularities from their dual Newton polyhedrons \cite{khovanskii, oka, Varc}. Here we also study a special class of  Newton non-degenerate hypersurface singularities but in a different way: We construct embedded toric resolutions of them from their Gröbner fans. Let us introduce the notion of Gröbner fan:

\noindent Let $f(x_1,x_2,\ldots,x_n)$ be a polynomial in $\mathbb{C}^n$. Take $\pmb{v}=(v_1,v_2,\ldots,v_n)\in (\mathbb{R}_{\geq0})^{n}$. The $\pmb{v}$-order of $f$ is a number given as $$o_{\pmb{v}}(f):=min \{ v_1\alpha_1+v_2\alpha_2+\ldots+v_n\alpha_n  \mid (\alpha_1,\alpha_2,\ldots,\alpha_n)\in supp(f)\}$$ The polynomial $$In_{\pmb{v}}(f):=\sum_{\{\pmb{\alpha}\in supp(f)\mid \sum_{i=1}^{n}v_i \alpha_i =o_{\pmb{v}}(f)\}}c_{\pmb{\alpha}}x_{1}^{\alpha_1}x_{2}^{\alpha_2}\cdot \cdot \cdot x_{n}^{\alpha_n}$$ is called the $\pmb{v}$-initial form of $f$. We define an equivalence relation as $$\pmb{v}\sim \pmb{w} \iff In_{\pmb{v}}(f)=In_{\pmb{w}}(f)$$ for $\pmb{w}=(w_1,w_2,\ldots,w_n)\in (\mathbb{R}_{\geq 0})^{n}$. The closure of the set $$C_{\pmb{v}}(f):=\{ \pmb{w}\in (\mathbb{R}_{\geq 0})^{n} \mid In_{\pmb{v}}(f)=In_{\pmb{w}}(f)\}$$ is a polyhedral cone. The union of these cones form a fan \cite{Mora}. We use the fan formed by the intersection of this fan and the first orthant. It is called the Gröbner fan $G(X)$ of $X$ \cite{AGS}. Each cone in $G(X)$ is called Gröbner cone. As a remark, one can compute the Gröbner fan by using a computer program which is called GFAN \cite{jensen}.

\section{Embedded Toric Resolution}
\noindent The rational double point singularities are given by one of the following equations:

$$A_n: f(x,y,z)=xy-z^{n+1},  \ \  n\in \mathbb{N}$$
$$D_n: f(x,y,z)=z^2-x(y^2+x^{n-2}), \ \ n\in \mathbb{N}, n\geq 4$$
$$E_6: f(x,y,z)=z^2+y^3+x^4$$ 
$$E_7: f(x,y,z)=x^2+y^3+yz^3$$
$$E_8: f(x,y,z)=z^2+y^3+x^5$$

\begin{rem}
Each member of $ADE$-singularities is Newton non-degenerate.
\end{rem}
 \noindent Embedded toric resolutions of $ADE$-singularities were studied in \cite{hussein, hc}. Here differently we construct regular refinements of Gröbner fans of each members of them. This leads us their embedded toric resolutions \cite{AGS, ar-hu}. Additionally, these resolutions will be minimal which means that:

\begin{defn}
An embedded toric resolution is called minimal if 

$i)$ The vectors in the regular refinement are irreducible.

$ii)$ The resolution graph obtaining from the refinement does not have any $-1$ curve means the self-intersection of each vertex is different from $-1$ (this gives us the resolution graph is minimal).
\end{defn}

 \noindent Note that the self-intersection of a vertex is computed as: The number $-s\in \mathbb{Z}_{\leq 0}$ with $s.v=\sum_{i=1}^{m}v_i$ is called the self-intersection of the vertex $v$ in the graph where each $v_i$ is adjacent to $v$ in the regular refinement.
\subsection{$A_n$-singularities}
Consider the hypersurface $X=V(f)\subset \mathbb{C}^3$ where $$f(x,y,z)=xy-z^{n+1}, \ \ n\in \mathbb{N}$$ Given $v_1\in  (\mathbb{R}_{\geq 0})^3$, let $In_{{v_1}}(f)=f$. The polyhedral cone associated with $v_1$ is
$$C_{v_1}(f)=\langle (n+1,0,1),(0,n+1,1) \rangle$$ 

\noindent Given $v_2\in  (\mathbb{R}_{\geq 0})^3$, let $In_{{v_2}}(f)=xy$. The polyhedral cone associated with $v_2$ is $$C_{v_2}(f)=\langle (0,0,1),(n+1,0,1),(0,n+1,1) \rangle$$ 

\noindent Given $v_3\in  (\mathbb{R}_{\geq 0})^3$, let $In_{{v_3}}(f)=-z^{n+1}$. The polyhedral cone associated with $v_3$ is $$C_{v_3}(f)=\langle (1,0,0),(0,1,0),(n+1,0,1),(0,n+1,1) \rangle$$ 
The Gröbner fan $G(X)$ of $X$ and its Gröbner cones can be seen as  
\begin{figure}[H]
    \centering
    \includegraphics[width=0.9\linewidth]{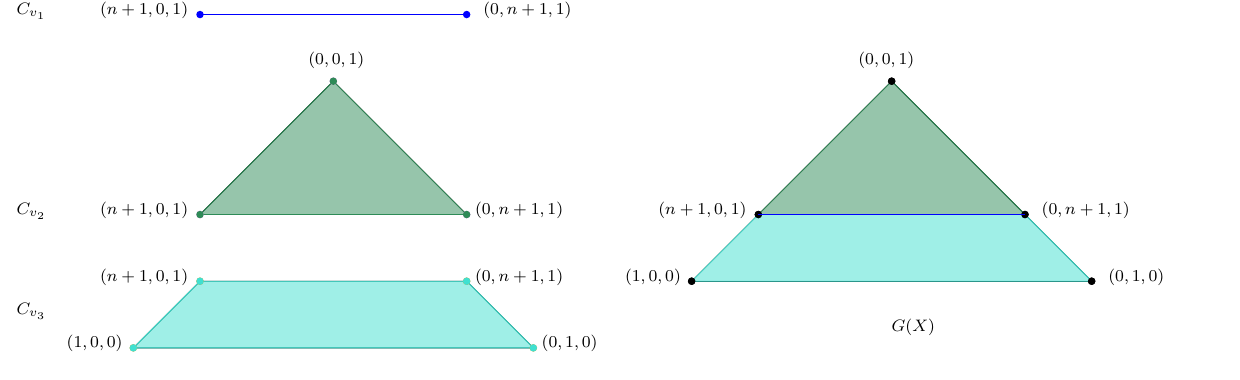}
    \caption{The Gröbner fan and its Gröbner cones for $A_n$-singularities}
\end{figure}

\begin{notn}
We draw the cones/fans by taking the intersection with $x+y+z=1$ to visualize easily.    
\end{notn}

\begin{rem}
The Gröbner fan $G(X)$ of $X$ is same as its dual Newton polyhedron which is given in \cite{hc}. As previously mentioned, the authors construct an embedded toric resolution by means of regular refinement of its dual Newton polyhedron with the vectors coming from its jet schemes. Here we look at the profile to obtain special vectors to do regular refinement of $G(X)$. 
\end{rem}

\noindent For each maximal dimensional cones ($C_{v_{2}}$ and $C_{v_{3}}$) of $G(X)$, the profiles $p_{C_{2}}$ and  $p_{C_{v_{3}}}$ are given by the region shown in the pink color:

\begin{figure}[H]
    \centering
    \includegraphics[width=0.9\linewidth]{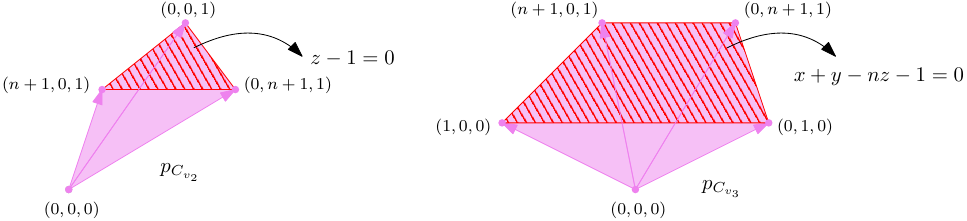}
    \caption{The profiles of maximal dimensional Gröbner cones for $A_n$-singularities}
\end{figure}

\noindent The boundaries of $p_{C_{v_{2}}}$ and  $p_{C_{v_{3}}}$ are $H_1: z-1=0$ and $H_2: x+y-nz-1=0$ respectively. Note that although $C_{v_3}$ is a non-simplicial cone, all its extremal vectors are on a unique hyperplane.
\begin{thm} \label{1}
 $i)$ For $A_n$-singularities, the elements of the set consisting of $\mathbb{Z}^3 \cap p_{C_{v_{i}}}$ give an embedded toric resolution of $X$ where $C_{v_i}$ is a maximal dimensional Gröbner cone in $G(X)$. Moreover these elements are irreducible means they are free over $\mathbb{Z}$.

$ii)$ For $A_n$-singularities, the elements on the skeleton of $G(X)$ give the minimal resolution graph of the singularities.
\end{thm}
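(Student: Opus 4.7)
The plan is to prove (i) by constructing a regular refinement $\Sigma$ of $G(X)$ with new rays given by the lattice points inside the profiles $p_{C_{v_2}}$ and $p_{C_{v_3}}$, then invoking the theorem of \cite{AGS, ar-hu} recalled in the introduction; part (ii) will follow from a self-intersection computation on the skeleton. First I would enumerate the two lattice point sets. Since $p_{C_{v_2}}$ lies in the affine plane $\{z=1\}$,
\[
\mathbb{Z}^3 \cap p_{C_{v_2}} = \{(a,b,1) : a,b \in \mathbb{Z}_{\geq 0},\ a+b \leq n+1\}.
\]
Since $p_{C_{v_3}}$ lies in $\{x+y-nz = 1\}$ and the quadrilateral forces $z \in \{0,1\}$, its lattice points are $(1,0,0), (0,1,0)$ at level $z=0$ together with $u_k := (n+1-k, k, 1)$, $k=0,\ldots,n+1$, at level $z=1$.

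Next I triangulate each profile using all its lattice points. Any maximal lattice triangulation of $p_{C_{v_2}}$ consists of empty lattice triangles of area $\tfrac{1}{2}$ (Pick's theorem), so the associated three-dimensional cone $\langle (a_1,b_1,1), (a_2,b_2,1), (a_3,b_3,1)\rangle$ has determinant $\pm 2 \cdot (\text{area}) = \pm 1$ and is regular. Inside $p_{C_{v_3}}$ I fan out from $(1,0,0)$ through the ordered sequence $(n+1,0,1), u_1, \ldots, u_n, (0,n+1,1), (0,1,0)$; a direct determinant calculation gives $|\det[(1,0,0)\ u_k\ u_{k+1}]| = 1$ for each $k$, and likewise for the two end cones. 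The two local triangulations agree on the common $2$-face $\langle (n+1,0,1), (0,n+1,1)\rangle$ since both introduce the same $u_1,\ldots,u_n$ there, so $\Sigma$ is a well-defined regular refinement of $G(X)$. Because $f = xy - z^{n+1}$ is Newton non-degenerate, the theorem of \cite{AGS, ar-hu} applied to $\Sigma$ yields the embedded toric resolution asserted in (i), and primitivity of every added ray is automatic since each has a coordinate equal to $1$.

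For (ii), the skeleton is the common edge of the two profiles joining $(n+1,0,1)$ to $(0,n+1,1)$, whose interior lattice points are exactly $u_1,\ldots,u_n$. The corresponding toric divisors meet the strict transform of $X$ in exceptional curves $E_1,\ldots,E_n$, and only consecutive $u_k, u_{k+1}$ produce a nonempty intersection $E_k \cap E_{k+1}$, so the graph is a chain on $n$ vertices. Using the identity $u_{k-1} + u_{k+1} = 2 u_k$ together with the self-intersection formula recalled before Subsection~4.1, one obtains $s = 2$ and hence $E_k^2 = -2$. This matches the classical minimal resolution graph of $A_n$ and contains no $(-1)$-curves, proving minimality.

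The main obstacle will be the regularity verification for the non-simplicial cone $C_{v_3}$: the triangulation must be chosen so that every resulting $3$-cone is primitive. Fanning out from $(1,0,0)$ is crucial because it reduces each determinant to a $2 \times 2$ minor in the $y$- and $z$-coordinates of consecutive skeleton vectors, which equals $\pm 1$ by the structure of the chain $(u_k)$. A secondary point in (ii) is verifying which rays of $\Sigma$ contribute genuine exceptional curves of the resolution of $X$ (rather than strict transforms of coordinate subvarieties or empty intersections); this is checked locally in the affine toric charts of $\pi_\Sigma$.
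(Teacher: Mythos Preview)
Your approach is essentially the paper's: enumerate the lattice points of the two profiles, build a regular refinement, invoke the theorem of \cite{AGS,ar-hu}, and then read off the $A_n$ chain from the skeleton via $u_{k-1}+u_{k+1}=2u_k$. Where the paper simply displays a figure for the refinement and asserts regularity, you supply an explicit construction (Pick's theorem on $p_{C_{v_2}}$, a fan-out from $(1,0,0)$ on $p_{C_{v_3}}$) together with the determinant checks and the compatibility on the common $2$-face; this is more detailed than the paper but follows the same line.

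One small mismatch with the statement: the theorem asserts that the elements are \emph{irreducible}, which in this paper means ``not expressible as a sum of two other elements of the refinement'' (this is condition~(i) in the definition of a minimal embedded toric resolution, and is exactly what the paper verifies). You instead argue primitivity (``each has a coordinate equal to $1$''), which is a different and weaker property. To close the gap you should check, for each added ray $v$, that $v\neq v'+v''$ with $v',v''$ among the rays of $\Sigma$; this is straightforward here since every ray has $z$-coordinate $0$ or $1$, so any putative decomposition forces one summand to have $z=0$, hence to be $(1,0,0)$ or $(0,1,0)$, and one checks directly that subtracting either of these never lands on another ray.
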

\begin{proof}
$i)$ The cones $C_{v_2}$ and $C_{v_3}$ are the maximal dimensional Gröbner cones for $G(X)$. The elements for $\mathbb{Z}^3 \cap p_{C_{v_{2}}}$ are 

$\bullet$ $(n+1,0,1), (n,1,1), \ldots,(0,n+1,1)$

$\bullet$ $(n,0,1), (n-1,1,1), \ldots,(0,n,1)$

$\bullet$ $\vdots$

$\bullet$ $(1,0,1),(0,1,1)$

$\bullet$ $(0,0,1)$

The elements for $\mathbb{Z}^3 \cap p_{C_{v_{3}}}$ are

$\bullet$ $(n+1,0,1), (n,1,1), \ldots,(0,n+1,1)$

$\bullet$ $(1,0,0),(0,1,0)$

\noindent We construct a regular refinement of $G(X)$ with these elements. This refinement is

\begin{figure}[H]
    \centering
    \includegraphics[width=0.9\linewidth]{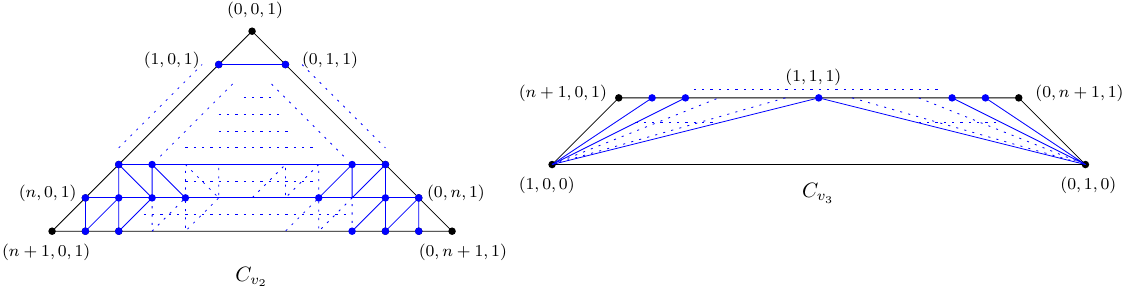}
    \caption{A regular refinement for $A_n$-singularities}
\end{figure}
\noindent Each element in the refinement is irreducible since it cannot be written as a sum of two other elements.

$ii)$ The elements on the skeleton of $G(X)$ gives 
\begin{figure}[H]
    \centering
    \includegraphics[width=0.9\linewidth]{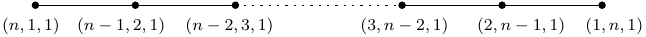}
    \caption{The resolution graph of $A_n$-singularities}
\end{figure}
\noindent The self-intersection of each vertex is $-2$ so it has no $-1$ curves. Hence it is minimal resolution graph.
\end{proof}

\begin{cor}
For $A_n$-singularities, the set of vectors in the intersection of profile of the maximal dimensional Gröbner cones of $G(X)$ and $\mathbb{Z}^3$ are exactly same as the set of embedded valuations of $X$.  
\end{cor}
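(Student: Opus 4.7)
The plan is to establish the equality of two explicitly describable finite sets of primitive vectors in $\mathbb{Z}_{\geq 0}^3$: on the one side, the lattice points enumerated in the proof of Theorem \ref{1}(i), namely
\[
\bigl(\mathbb{Z}^3 \cap p_{C_{v_2}}\bigr)\cup \bigl(\mathbb{Z}^3 \cap p_{C_{v_3}}\bigr),
\]
and on the other side, the embedded valuation set of $X$ as constructed from jet schemes in \cite{hussein} and used in \cite{hc}. My approach is a direct term-by-term comparison, so the core of the proof is bookkeeping rather than a geometric argument.

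First, I would recall the concrete description of the embedded valuation set for $A_n$-singularities from \cite{hussein}: each irreducible component of $J_m(X)$ passing through $\Sing(X)$ contributes a primitive vector, and an explicit formula for these contributors is given there. Listing them gives exactly the vectors $(a,b,1)$ with $a,b\ge 0$ and $a+b\le n+1$, together with the two boundary vectors $(1,0,0)$ and $(0,1,0)$; the first family corresponds to components sitting over the generic point of $\Sing(X)$, while the last two correspond to components supported on the coordinate axes in the ambient $\mathbb C^3$. Second, from the proof of Theorem \ref{1}(i), the lattice points of $p_{C_{v_2}}$ (where $H_1\colon z=1$) are precisely the $(a,b,1)$ with $a+b\le n+1$, and the additional lattice points of $p_{C_{v_3}}$ (cut out by $H_2\colon x+y-nz=1$) contribute $(1,0,0)$ and $(0,1,0)$. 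Thus the two sets coincide as subsets of $\mathbb{Z}_{\ge 0}^3$.

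To finish, I would verify primitivity of each listed vector and observe that no vector appears on only one side. The vectors $(a,b,1)$ are automatically primitive because of the last coordinate $1$; the vectors $(1,0,0)$ and $(0,1,0)$ are primitive by inspection. Cross-checking against the two profiles shows that every primitive lattice vector in $p_{C_{v_2}}\cup p_{C_{v_3}}$ is accounted for and belongs to the embedded valuation list, and conversely.

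The main obstacle I anticipate is not mathematical but notational: the embedded valuation set in \cite{hussein, hc} is indexed by combinatorial data of the jet scheme components, so I must carefully translate that indexing into triples $(a,b,c)\in\mathbb{Z}_{\ge 0}^3$ in order to match the enumeration coming from the profiles. Once this translation is in place, the equality of the two sets is immediate and the corollary follows.
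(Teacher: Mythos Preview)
Your proposal is correct and follows the same approach as the paper: the paper does not give a formal proof of this corollary at all, but simply remarks that ``this fact can be seen combinatorially by looking at \cite{hussein, hc},'' which is precisely the direct list-matching you outline. Your write-up is in fact more detailed than the paper's one-line justification; the only caveat is that your informal gloss on which jet-scheme components correspond to which vectors (``components sitting over the generic point of $\Sing(X)$'' versus ``components supported on the coordinate axes'') goes beyond what is needed and should be checked against the actual indexing in \cite{hussein} before being stated as fact.
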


\noindent This fact can be seen combinatorially by looking at \cite{hussein, hc}.
\begin{cor}
 The embedded toric resolution that we construct is minimal.   
\end{cor}

\begin{proof}
By Theorem \ref{1}($i$), the elements give an embedded toric resolution and these elements are irreducible. By Theorem \ref{1}($ii$), the resolution graph is minimal. Hence the embedded toric resolution we construct is minimal.
\end{proof}

\noindent As we mentioned in the introduction, there are some significant studies about the resolutions of $ADE$-singularities. Here one can find these results: 
\begin{thm}
   \cite{hussein} For $A_n$-singularities, when $n=m$, in $m$-jet scheme over the singular locus there are $n$ irreducible components which is equal to the number of vertices of its minimal resolution graph.
\end{thm}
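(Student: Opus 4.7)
The plan is to write down the equations cutting out $J_n(X)$, restrict them to the fiber over the singular point, exploit the hypothesis $m=n$ to eliminate every $z$-monomial, and then decompose the remaining variety by the vanishing orders of two truncated power series.

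First I would parametrize an $n$-jet by $x(t)=\sum_{i=0}^n x_i t^i$, $y(t)=\sum_{i=0}^n y_i t^i$, $z(t)=\sum_{i=0}^n z_i t^i$, substitute into $f=xy-z^{n+1}$, and write $f(x(t),y(t),z(t))=\sum_{k=0}^{n}F_k t^k\pmod{t^{n+1}}$; then $J_n(X)$ is the affine scheme cut out by $F_0=\cdots=F_n=0$. To restrict to $\pi^{-1}(\Sing(X))=\pi^{-1}(0)$ one sets $x_0=y_0=z_0=0$. The key observation, available precisely because $m=n$, is that every $z$-monomial in $F_k$ has the form $z_{i_1}\cdots z_{i_{n+1}}$ with $\sum_\ell i_\ell=k\leq n$, so by pigeonhole some $i_\ell=0$; imposing $z_0=0$ therefore kills all $z$-contributions. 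What remains is the system $F_k=\sum_{i+j=k,\,i,j\geq 1}x_i y_j=0$ for $k=2,3,\ldots,n$, while $F_0$ and $F_1$ vanish automatically.

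Next I would interpret this system as the Cauchy-product condition $X(t)Y(t)\equiv 0\pmod{t^{n+1}}$, where $X(t)=\sum_{i\geq 1}x_i t^i$ and $Y(t)=\sum_{j\geq 1}y_j t^j$. For each $a\in\{1,\ldots,n\}$ I would introduce the prime $P_a=(x_1,\ldots,x_{a-1},y_1,\ldots,y_{n-a})$; on $V(P_a)$ the truncated series $X$ has order at least $a$ and $Y$ has order at least $n+1-a$, so the Cauchy product vanishes modulo $t^{n+1}$, giving $V(P_a)\subset\pi^{-1}(0)$. Each $V(P_a)$ is irreducible of dimension $2n+1$ (the $z_i$ are free throughout), and any point of $\pi^{-1}(0)$ with $X$-order $a^{\ast}$ and $Y$-order $b^{\ast}$ satisfies $a^{\ast}+b^{\ast}\geq n+1$, placing it in $V(P_a)$ for $a=\max(1,n+1-b^{\ast})$. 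Thus the $n$ subvarieties $V(P_1),\ldots,V(P_n)$ cover the fiber.

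The main obstacle will be showing that none of the $V(P_a)$ is redundant. For this I would exhibit, for each $a$, a generic point of $V(P_a)$ with $x_a\neq 0$ and $y_{n+1-a}\neq 0$; such a point fails every other ideal $P_{a'}$ with $a'\neq a$, so no $V(P_a)$ is contained in the union of the others. It then follows that the $n$ subvarieties are exactly the irreducible components of $\pi^{-1}(0)\subset J_n(X)$. Comparison with the minimal resolution graph is immediate, since the $A_n$ graph is a chain of $n$ vertices of self-intersection $-2$, matching this count; the same chain is recovered independently by Theorem~\ref{1}$(ii)$ from the skeleton of the Gröbner fan.
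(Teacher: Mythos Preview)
Your argument is correct. In the present paper this theorem is not proved but quoted from \cite{hussein}; the paper only records the defining ideals $I_m^i$ of the irreducible components. Your primes $P_a$ (together with $x_0,y_0,z_0$) coincide with those ideals up to relabelling, namely $\langle x_0,y_0,z_0\rangle + P_a = I_m^{\,n+1-a}$, so your proof follows exactly the decomposition of the cited source and supplies the verification that the paper omits.
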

\noindent  These irreducible components $J_{m}^{i}(X)$, $1\leq i \leq n$, are given by the following ideals in $\mathbb{C}[x_0,\ldots,x_m,y_0,\ldots,y_m,z_0,\ldots,z_m]$ respectively: $$I_{m}^{1}:=\langle x_0,y_0,z_0,x_1,\ldots,x_{(m-1)} \rangle $$  $$I_{m}^{2}:=\langle x_0,y_0,z_0,x_1,\ldots,x_{(m-2)},y_1 \rangle $$ $$\vdots$$ $$I_{m}^{n-1}:=\langle x_0,y_0,z_0,x_1,y_1,y_2\ldots,y_{(m-2)} \rangle $$ $$I_{m}^{n}:=\langle x_0,y_0,z_0,y_1,y_2\ldots,y_{(m-1)} \rangle $$

\noindent In \cite{koreda}, Y. Koreeda give the following construction: When $n=m$, for $A_n$-singularities consider the set of intersection of the irreducible components: $$K:=\{ J_{m}^{i}(X)\cap J_{m}^{j}(X) \mid i\neq j, i,j\in \{1,2,\ldots,n\}\}$$    

\begin{lem}
\cite{koreda} We have the inclusion relation $$I_{m}^{k}\cap I_{m}^{\ell} \subset I_{m}^{i}\cap I_{m}^{j} $$ where $i \leq k \leq \ell \leq j$ which gives $$J_{m}^{i}(X)\cap J_{m}^{j}(X)\subset J_{m}^{k}(X)\cap J_{m}^{\ell}(X)$$    
\end{lem}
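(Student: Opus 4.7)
The approach is to exploit the fact that every $I_m^p$ is a prime monomial ideal generated by a subset of the coordinate variables. Setting
$$S_p := \{x_0, y_0, z_0\} \cup \{x_1, \dots, x_{m-p}\} \cup \{y_1, \dots, y_{p-1}\},$$
we have $I_m^p = \langle S_p \rangle$, so the vanishing locus $J_m^p(X) = V(I_m^p)$ is the coordinate subspace on which the variables in $S_p$ vanish. The scheme-theoretic intersection $J_m^a(X) \cap J_m^b(X)$ is then cut out by $S_a \cup S_b$, i.e., by the sum ideal $I_m^a + I_m^b$. Consequently, the content of the lemma is the inclusion of sum ideals $I_m^k + I_m^\ell \subset I_m^i + I_m^j$, which by the order-reversing correspondence between ideals and vanishing loci yields $J_m^i(X) \cap J_m^j(X) \subset J_m^k(X) \cap J_m^\ell(X)$.

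The plan then reduces to a combinatorial check on generator sets. For any $a \leq b$ I would first observe that
$$S_a \cup S_b \;=\; \{x_0, y_0, z_0\} \cup \{x_1, \dots, x_{m-a}\} \cup \{y_1, \dots, y_{b-1}\},$$
since $a \leq b$ forces $m-a \geq m-b$ and $a-1 \leq b-1$, so the $x$-part of the union is dominated by $S_a$ and the $y$-part by $S_b$. Applying this with $(a,b) = (k,\ell)$ and $(a,b) = (i,j)$, the hypothesis $i \leq k \leq \ell \leq j$ immediately delivers $\{x_1, \dots, x_{m-k}\} \subset \{x_1, \dots, x_{m-i}\}$ (from $i \leq k$) and $\{y_1, \dots, y_{\ell-1}\} \subset \{y_1, \dots, y_{j-1}\}$ (from $\ell \leq j$). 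Together with the common generators $x_0, y_0, z_0$ this gives $S_k \cup S_\ell \subset S_i \cup S_j$, hence the desired ideal inclusion, and therefore the desired inclusion of jet scheme intersections upon passing to $V(\cdot)$.

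The argument is elementary and contains no real obstacle; the only subtle point is notational, namely that the ``intersection'' of ideals appearing in the lemma statement is best read as the sum of ideals, i.e., as the ideal defining the scheme-theoretic intersection of the corresponding subvarieties. I would flag this convention at the outset of the proof so that the chain of implications between the ideal-level inclusion and the geometric inclusion is unambiguous, after which the proof is a two-line comparison of variable sets.
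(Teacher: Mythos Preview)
The paper does not prove this lemma; it is quoted from \cite{koreda} without argument, so there is no ``paper's own proof'' to compare against. Your argument is correct and is essentially the natural one: each $I_m^p$ is generated by a set $S_p$ of coordinate variables, the ideal cutting out $J_m^a(X)\cap J_m^b(X)$ is $I_m^a+I_m^b=\langle S_a\cup S_b\rangle$, and the elementary inclusion $S_k\cup S_\ell\subset S_i\cup S_j$ for $i\le k\le \ell\le j$ yields the desired geometric inclusion.

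Your diagnosis of the notation is also correct and worth stating plainly in the write-up. Read literally as an intersection of ideals, the displayed inclusion $I_m^k\cap I_m^\ell\subset I_m^i\cap I_m^j$ is false (for instance with $m=n=4$ and $(i,k,\ell,j)=(1,2,3,4)$ one has $x_1\in I_4^2\cap I_4^3$ but $x_1\notin I_4^4$, hence $x_1\notin I_4^1\cap I_4^4$), and in any case an inclusion of ideal intersections would translate into an inclusion of \emph{unions} $J_m^i\cup J_m^j\subset J_m^k\cup J_m^\ell$, not of intersections. The only reading under which the first display implies the second is the one you adopt, namely that ``$\cap$'' denotes the defining ideal of the scheme-theoretic intersection, i.e.\ the sum $I_m^a+I_m^b$. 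Make this explicit at the start of your proof and the rest is, as you say, a two-line comparison of variable sets.
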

\noindent All maximal elemets of the set $K$ with respect to inclusion relation form a subset $E$.
\begin{conj}
\cite{koreda} For some fix $m$, the graph $G$ is constructed with $(V,E)$ as 

$i)$ The irreducible components $J_{m}^{i}(X)$ are the vertices of $G$.

$ii)$ The elements of the set $E$ are the edges of $G$.
\end{conj}
\noindent For $A_n$-singularities, when $n=m$, the vertex set $$\{ J_{m}^{1}(X), J_{m}^{2}(X),\ldots,J_{m}^{n}(X) \}$$ and the edges set is $$\{ J_{m}^{i}(X)\cap J_{m}^{j}(X) \mid j=i+1 \}$$ The graph obtained by the construction is

\begin{figure}[H]
    \centering
    \includegraphics[width=0.7\linewidth]{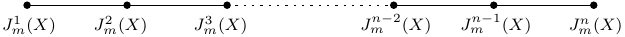}
    \caption{The graph of $A_n$-singularities obtaining from the construction}
\end{figure}

\noindent Note that each $J_{m}^{i}(X)$ correspond to the vector $(n-i+1,i,1)$.
\begin{cor}
\cite{koreda} For $A_n$-singularities, the graph obtained by the construction is isomorphic to its minimal resolution graph.    
\end{cor}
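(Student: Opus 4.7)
The plan is to directly match the vertex set and edge set of the abstract graph $G$ with those of the minimal resolution graph of the $A_n$-singularity, and then observe that the natural labelled bijection supplies the isomorphism.

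For vertices, the minimal resolution graph of $A_n$ is the Dynkin chain on $n$ vertices, as already reflected in Theorem \ref{1}(ii), while the theorem attributed to \cite{hussein} provides exactly $n$ irreducible components $J_m^1(X),\ldots,J_m^n(X)$ when $n=m$. Thus the candidate isomorphism is $J_m^i(X) \mapsto E_i$, where $E_1,\ldots,E_n$ are the exceptional curves read along the chain.

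The heart of the argument is to identify the maximal elements of $K$ with respect to inclusion. I would first use the preceding lemma to rule out non-consecutive pairs: for any $(i,j)$ with $j-i \geq 2$, applying the lemma with $(k,\ell)=(i,i+1)$ gives $J_m^i(X)\cap J_m^j(X) \subset J_m^i(X)\cap J_m^{i+1}(X)$, and one reads off from the explicit generators of the $I_m^j$ listed above that this inclusion is strict (adjacent members of the family differ by a single variable). Next I would establish pairwise incomparability among the consecutive-pair intersections: the relevant ideals corresponding to $J_m^i\cap J_m^{i+1}$ and $J_m^{i+1}\cap J_m^{i+2}$ differ by one $x$-variable in one direction and one $y$-variable in the other, so neither contains the other, forcing both of them to be maximal in $K$. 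Hence $E = \{ J_m^i(X)\cap J_m^{i+1}(X) \mid 1 \leq i \leq n-1\}$, and under the bijection of the previous paragraph this edge set maps onto the edge set of the $A_n$ Dynkin chain, completing the isomorphism.

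The main obstacle is the incomparability check. The lemma only asserts that nesting of intervals $[k,\ell] \subset [i,j]$ produces nesting of the corresponding intersections in one direction, so the maximality of each $J_m^i\cap J_m^{i+1}$ in $K$ must be deduced separately, from an explicit comparison of the generator sets of the $I_m^j$. This is purely combinatorial bookkeeping, but it is the step where the argument actually closes; everything else is a direct match of cardinalities and labels.
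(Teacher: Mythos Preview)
Your proposal is correct and follows the same line as the paper: identify the vertex set with the $n$ irreducible components, identify the edge set $E$ with the consecutive-pair intersections, and then match the resulting chain with the $A_n$ Dynkin diagram. Note, however, that the paper does not actually supply a proof of this corollary---it simply records the edge set as $\{J_m^i(X)\cap J_m^{i+1}(X)\}$ and cites \cite{koreda} for the result---so your argument (using the inclusion lemma to discard non-consecutive pairs and a direct generator comparison to show incomparability of adjacent ones) fills in the details that the paper leaves to the reference.
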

\subsection{$D_n$-singularities}
Consider the hypersurface $X=V(f)\subset \mathbb{C}^3$ where $$f(x,y,z)=z^2-xy^2-x^{n-1}, \ \ n\in \mathbb{N}, n\geq 4$$ The Gröbner cones are $$C_{v_1}(f)=\langle (2,n-2,n-1) \rangle$$
$$C_{v_2}(f)=\langle (2,0,1),(2,n-2,n-1) \rangle$$
$$C_{v_3}(f)=\langle (0,0,1),(2,n-2,n-1) \rangle$$
$$C_{v_4}(f)=\langle (0,1,0),(2,n-2,n-1) \rangle$$
$$C_{v_5}(f)=\langle (2,0,1),(0,0,1),(2,n-2,n-1) \rangle$$
$$C_{v_6}(f)=\langle (0,1,0),(0,0,1),(2,n-2,n-1) \rangle$$
$$C_{v_7}(f)=\langle (1,0,0),(0,1,0),(2,0,1),(2,n-2,n-1) \rangle$$
such that $In_{v_1}(f)=f$, $In_{v_2}(f)=z^2-xy^2$, $In_{v_3}(f)=-xy^2-x^{n-1}$, $In_{v_4}(f)=z^2-x^{n-1}$, $In_{v_5}(f)=-xy^2$, $In_{v_6}(f)=-x^{n-1}$ and $In_{v_7}(f)=z^2$. The Gröbner fan $G(X)$ of $X$ is the union $\cup_{i=1}^{7}C_{v_i}$.

\begin{figure}[H]
    \centering
    \includegraphics[width=0.9\linewidth]{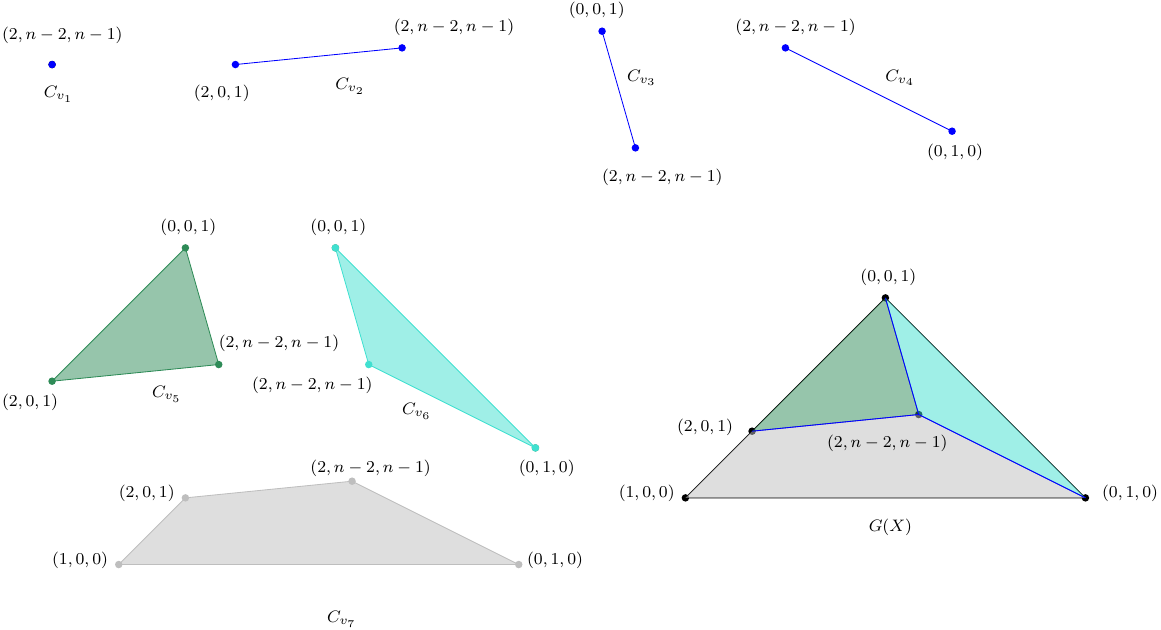}
    \caption{The Gröbner fan and its Gröbner cones for $D_n$-singularities}
\end{figure}

\begin{rem}
The Gröbner fan $G(X)$ of $X$ is same as its dual Newton polyhedron \cite{hc}.
\end{rem}

\noindent For each maximal dimensional cones ($C_{v_{5}}$, $C_{v_{6}}$ and $C_{v_{7}}$) of $G(X)$, the profiles $p_{C_{v_{5}}}$, $p_{C_{v_{6}}}$ and  $p_{C_{v_{7}}}$ are given as 
\begin{figure}[H]
    \centering
    \includegraphics[width=0.9\linewidth]{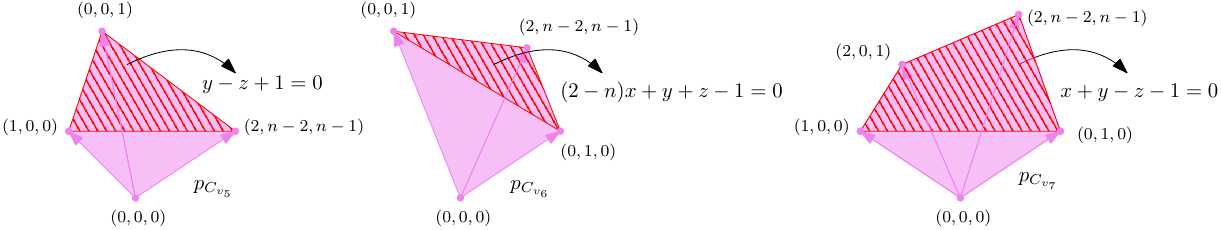}
    \caption{The profiles of maximal dimensional Gröbner cones for $D_n$-singularities}
\end{figure}

\noindent The boundaries of $p_{C_{v_{5}}}$, $p_{C_{v_{6}}}$ and $p_{C_{v_{7}}}$ are $H_1: y-z+1=0$, $H_2: (2-n)x+y+z-1=0$ and $H_3: x+y-z-1=0$ respectively. Note that although $C_{v_7}$ is a non-simplicial cone, all its extremal vectors are on a unique hyperplane.

\noindent We have two different sub-cases depending on $n$ is even or odd.

\noindent $\bullet$ $n$ is even:
\begin{thm} \label{2a}
 $i)$ For $D_n$-singularities, $n$ is even, the elements of the set consisting of $\mathbb{Z}^3 \cap p_{C_{v_{i}}}$ give an embedded toric resolution of $X$ where $C_{v_i}$ is a maximal dimensional Gröbner cone in $G(X)$. Moreover these elements are irreducible means they are free over $\mathbb{Z}$.

$ii)$  For $D_n$-singularities, $n$ is even, the elements on the skeleton of $G(X)$ give the minimal resolution graph of the singularities.
\end{thm}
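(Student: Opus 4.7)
The plan is to mirror the structure of the proof of Theorem \ref{1}, adapting the bookkeeping to the three maximal Gröbner cones $C_{v_5}, C_{v_6}, C_{v_7}$ and exploiting the parity assumption on $n$ at the places where it matters.

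First, I would enumerate $\mathbb{Z}^3\cap p_{C_{v_i}}$ for $i=5,6,7$ using the explicit boundary equations. Since $p_{C_{v_5}}$ lies on $H_1:\ y-z+1=0$, its lattice points have the form $(a,k-1,k)$ with $k\ge 1$ ranging up to the extremal vector $(2,n-2,n-1)$; since $p_{C_{v_7}}$ lies on $H_3:\ x+y-z-1=0$, its lattice points can be written as $(a,b,a+b-1)$ running between the extremal vectors; and $p_{C_{v_6}}$ lies on $H_2:\ (2-n)x+y+z-1=0$, which for $n$ even is where the parity hypothesis enters in a decisive way, because the lattice points on this hyperplane come in a chain that interpolates cleanly between $(0,0,1)$, $(0,1,0)$ and $(2,n-2,n-1)$ only when $n$ is even; the odd case produces a different combinatorial pattern and is handled separately later. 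Listing these points in the same ``staircase'' fashion as in the $A_n$ case gives the candidate set of rays.

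Next, I would declare the refinement $\Sigma$ obtained by triangulating each maximal profile with these lattice rays, in the obvious ``row-by-row'' way, and check regularity: for every elementary triangle produced, the $3\times 3$ determinant formed by its three primitive generators equals $\pm 1$. Because all generators in each profile lie on a single hyperplane (as the remark after the profile discussion emphasizes, even for the non-simplicial cone $C_{v_7}$), this reduces to a planar lattice computation: a triangle with primitive vertices on a common affine lattice plane is unimodular iff it has no interior or boundary lattice points other than its vertices, and the staircase construction makes this manifest. Applying the motivating theorem of \cite{AGS, ar-hu}, which says a regular refinement compatible with the Gröbner fan yields an embedded toric resolution, then completes the resolution statement. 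Irreducibility of each ray (freeness over $\mathbb Z$) follows from checking that none of the listed primitive vectors can be decomposed as a sum of two other vectors in the refinement lying in the same profile; this is immediate from the coordinate pattern, since the $z$- (resp.\ ``height-'') coordinate of any nontrivial sum would exceed the maximum $z$-coordinate present in that profile.

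For part (ii), I would read off the skeleton of $G(X)$, record its extremal rays together with their adjacencies in the refinement built in (i), and compute the self-intersection of each vertex via the relation $s\cdot v=\sum_i v_i$ where the $v_i$'s are the rays adjacent to $v$. The central ray $(2,n-2,n-1)$ will have three neighbours, forcing the characteristic branching of the $D_n$ Dynkin diagram; the remaining vertices form two chains whose self-intersections can be computed inductively along the staircase. The parity of $n$ enters here to guarantee integrality of the intermediate coordinates and to ensure the final vertex on each chain closes up correctly. I expect every self-intersection to come out equal to $-2$, so no $(-1)$-curves appear and the resolution graph is minimal.

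The main obstacle will be the regularity check for the triangulation of $p_{C_{v_6}}$, since its boundary hyperplane $H_2$ depends on $n$ and its lattice-point enumeration is more delicate than that of $p_{C_{v_5}}$ or $p_{C_{v_7}}$; this is precisely where the assumption that $n$ is even is needed, and handling the divisibility condition carefully is the one step that is not a direct transcription of the $A_n$ argument.
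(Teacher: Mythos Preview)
Your plan matches the paper's proof, which is itself a direct enumeration: the paper simply lists $\mathbb{Z}^3\cap p_{C_{v_i}}$ for $i=5,6,7$, displays the resulting regular refinement as a figure, asserts irreducibility, and reads off the $D_n$ graph with all self-intersections equal to $-2$. Two small corrections to your outline are worth making before you write it up. First, the parity of $n$ is visible in \emph{all three} profiles, not just $p_{C_{v_6}}$: for instance the $x=1$ row in $p_{C_{v_5}}$ terminates at $(1,\tfrac{n-2}{2},\tfrac{n}{2})$, and $p_{C_{v_6}}$ is in fact the simplest of the three, containing only its extremal vectors together with the single interior point $(1,\tfrac{n-2}{2},\tfrac{n}{2})$, so your identification of it as the ``main obstacle'' is backwards. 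Second, your irreducibility argument via a $z$-coordinate bound does not work as stated (sums of small-$z$ vectors in $p_{C_{v_5}}$ can still have $z$ well below $n-1$); the cleaner observation is that each profile lies on an affine hyperplane $\ell=1$ with $\ell$ a primitive linear form, so any sum of two nonzero profile vectors satisfies $\ell=2$ and automatically falls outside the profile.
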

\begin{proof}
$i)$ The cones $C_{v_5}$, $C_{v_6}$ and $C_{v_7}$ are the maximal dimensional cones for $G(X)$. The elements for $\mathbb{Z}^3 \cap p_{C_{v_{5}}}$ are 

$\bullet$ $(1,0,1), (1,1,2), \ldots,(1,\frac{n-2}{2},\frac{n}{2})$

$\bullet$ $(2,0,1), (2,1,2), \ldots,(2,n-2,n-1)$

$\bullet$ $(0,0,1)$

The elements for $\mathbb{Z}^3 \cap p_{C_{v_{6}}}$ are

$\bullet$ $(0,0,1), (0,1,0),(2,n-2,n-1),(1,\frac{n-2}{2},\frac{n}{2})$

$\bullet$ $(1,0,0),(0,1,0)$

 The elements for $\mathbb{Z}^3 \cap p_{C_{v_{7}}}$ are 

$\bullet$ $(2,0,1), (2,1,2), \ldots,(2,n-2,n-1)$

$\bullet$ $(1,1,1), (1,2,2), \ldots,(1,\frac{n}{2}-\frac{n}{2}-1)$

$\bullet$ $(1,0,0),(0,1,0)$

\noindent A regular refinement of the Gröbner fan $G(X)$ of $X$ is

\begin{figure}[H]
    \centering
    \includegraphics[width=0.7\linewidth]{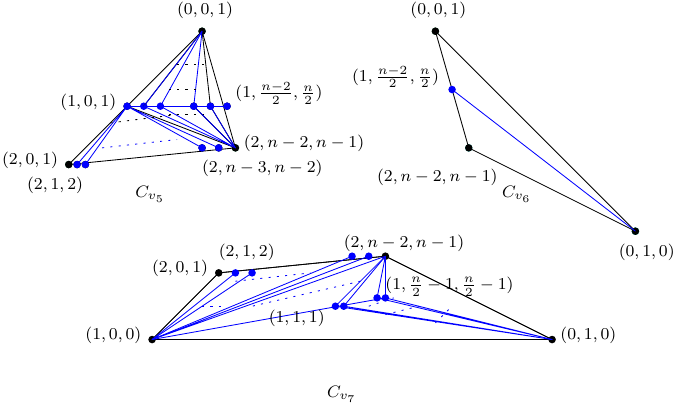}
    \caption{A regular refinement for $D_n$-singularities, n is even}
\end{figure}

\noindent Each element in the refinement is irreducible since it cannot be written as a sum of two other elements.

$ii)$ The elements on the skeleton of $G(X)$ gives 
\begin{figure}[H]
    \centering
    \includegraphics[width=0.7\linewidth]{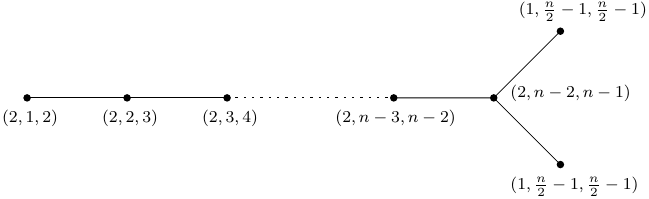}
    \caption{The resolution graph of $D_n$-singularities, n is even}
\end{figure}

\noindent The self-intersection of each vertex is $-2$ so it has no $-1$ curves. Hence it is minimal resolution graph.
\end{proof}

\begin{cor}
 For $D_n$-singularities, $n$ is even, the set of the vectors in the intersection of profile of the maximal dimensional Gröbner cones of $G(X)$ and $\mathbb{Z}^3$ are exactly same as the set of embedded valuations set of $X$ given in \cite{hussein}.  
\end{cor}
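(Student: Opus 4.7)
The plan is to establish this corollary by a direct enumeration and comparison: the vectors inside the profiles have already been listed explicitly in the proof of Theorem \ref{2a}(i), so what remains is to check, vector by vector, that this list coincides with the embedded valuation set of $X$ recorded in \cite{hussein} for $D_n$-singularities with $n$ even.

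First, I would collect the vectors from the three maximal-dimensional profiles $p_{C_{v_5}}$, $p_{C_{v_6}}$, $p_{C_{v_7}}$ into a single set $\mathcal{P}$, removing duplicates (for instance $(2,0,1)$, $(0,0,1)$, $(1,0,0)$, $(0,1,0)$, $(2,n-2,n-1)$ and the central vector $(1,\tfrac{n-2}{2},\tfrac{n}{2})$ appear in more than one profile). Because each profile is a bounded convex hull cut out by a single supporting hyperplane ($H_1,H_2,H_3$ respectively, computed just before the statement of Theorem \ref{2a}), the lattice points in each $p_{C_{v_i}}$ are parametrised by integer solutions to the corresponding linear equation with the appropriate inequalities, which gives exactly the families enumerated in Theorem \ref{2a}(i).

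Second, I would recall the embedded valuation set from \cite{hussein}, which is the collection of vectors $(\mathrm{ord}_t x,\mathrm{ord}_t y,\mathrm{ord}_t z)$ attached to the irreducible components of the jet schemes $J_m(X)$ passing through the singular locus. For $D_n$ with $n$ even, these vectors break into families of the same two shapes $(1,k,k)$, $(1,k,k{+}1)$ and $(2,k,k{+}1)$ (plus the extremal rays $(1,0,0)$, $(0,1,0)$, $(0,0,1)$ and the terminal vector $(2,n-2,n-1)$ on the hyperplane $H_2$). I would match each family term by term with a family from $\mathcal{P}$: the $(2,k,k{+}1)$ family fits on $H_1\cap H_3$, the $(1,k,k{+}1)$ family lies in $p_{C_{v_5}}$, the $(1,k,k)$ family lies in $p_{C_{v_7}}$, and the distinguished vector $(1,\tfrac{n-2}{2},\tfrac{n}{2})$ on the common face of $p_{C_{v_5}}$, $p_{C_{v_6}}$, $p_{C_{v_7}}$ is precisely the central embedded valuation produced by the $D_n$ jet scheme.

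Finally, to close the argument I would check the reverse inclusion: any lattice point in any of the three profiles satisfies one of the defining hyperplane equations $H_1$, $H_2$, $H_3$ together with nonnegativity constraints, and solving these equations over $\mathbb{Z}^3$ gives exactly the families above with no extra solutions. I expect the main obstacle to be purely bookkeeping: making sure no lattice point is over-counted across the three profiles (the overlaps lie on shared faces, so one must argue they are genuine equalities rather than coincidences), and translating the indexing conventions of \cite{hussein} into the coordinate conventions used here so the match is literal, not merely cardinality-wise. Once the two sides are written in a common form, the coincidence is transparent.
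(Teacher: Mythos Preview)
The paper gives no proof of this corollary at all: it is stated as a bare assertion, in the same spirit as the $A_n$ case where the authors simply write ``This fact can be seen combinatorially by looking at \cite{hussein, hc}.'' Your proposal is a correct and fully explicit elaboration of precisely that combinatorial comparison, so you are not diverging from the paper's approach but rather filling in what the paper leaves to the reader.
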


\begin{cor}
 The embedded toric resolution that we construct is minimal.   
\end{cor}

\begin{proof}
By Theorem \ref{2a}($i$), the elements give an embedded toric resolution and these elements are irreducible. By Theorem \ref{2a}($ii$), the resolution graph is minimal. Hence the embedded toric resolution we construct is minimal.
\end{proof}

\noindent $\bullet$ $n$ is odd:
\begin{thm} \label{2b}
 $i)$  For $D_n$-singularities, $n$ is odd, the elements of the set consisting of $\mathbb{Z}^3 \cap p_{C_{v_{i}}}$ give an embedded toric resolution of $X$ where $C_{v_i}$ is a maximal dimensional Gröbner cone in $G(X)$. Moreover these elements are irreducible means they are free over $\mathbb{Z}$.

$ii)$  For $D_n$-singularities, $n$ is odd, the elements on the skeleton of $G(X)$ give the minimal resolution graph of the singularities.
\end{thm}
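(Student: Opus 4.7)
The plan is to mirror the proof of Theorem \ref{2a}, adjusting the lattice-point enumeration to reflect the parity of $n$. The maximal dimensional Gröbner cones $C_{v_5}, C_{v_6}, C_{v_7}$ and their boundary hyperplanes $H_1, H_2, H_3$ are independent of parity, but the integer lattice structure on each profile depends on it. The three parity-driven changes are: the edge from $(0,0,1)$ to $(2,n-2,n-1)$ loses its ``midpoint'' $(1,(n-2)/2,n/2)$, because $\gcd(2, n-2) = 1$ for $n$ odd; the edge from $(0,1,0)$ to $(2, n-2, n-1)$ gains the new interior lattice point $(1,(n-1)/2,(n-1)/2)$, since $(n-1)/2$ is now an integer; and the column $x=1$ inside $p_{C_{v_5}}$ terminates one step earlier.

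For part $(i)$, I would enumerate $\mathbb{Z}^3 \cap p_{C_{v_j}}$ for $j = 5, 6, 7$ by scanning lattice points at $x = 0, 1, 2$ on each hyperplane $H_j$. At $x = 2$ the columns match the even case. At $x = 1$, the columns become $(1,0,1), (1,1,2), \ldots, (1,(n-3)/2,(n-1)/2)$ inside $p_{C_{v_5}}$; a single new vector $(1,(n-1)/2,(n-1)/2)$ inside $p_{C_{v_6}}$; and $(1,1,1), (1,2,2), \ldots, (1,(n-1)/2,(n-1)/2)$ inside $p_{C_{v_7}}$. I would then triangulate each profile using these vectors and verify regularity of every resulting $3$-dimensional sub-cone by a direct $3 \times 3$ determinant computation. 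The motivating Theorem from the introduction then yields an embedded toric resolution. Irreducibility follows, as in the even case, from the fact that no listed primitive vector can be written as a sum of two others.

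For part $(ii)$, the skeleton vectors are read off the refinement, and their self-intersections are computed via $s \cdot v = \sum v_i$, summing over neighbours of $v$ in the refinement. A routine coordinate calculation shows $s = 2$ for every vertex, so the resulting graph is the Dynkin diagram of type $D_n$ ($n-2$ vertices arranged in a chain, with two additional vertices branching from one end), and it has no $-1$ curves.

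The main obstacle will be verifying regularity of the triangulation along the now-indivisible diagonal $(0,0,1)\to(2,n-2,n-1)$. In the even case, its midpoint glues the triangulations of $p_{C_{v_5}}$ and $p_{C_{v_7}}$ naturally along this common edge; for $n$ odd, this edge must be used as a single long diagonal and the adjacent sub-cones will appear ``skinny''. Regularity then reduces to checking identities such as
\[
\det\begin{pmatrix} 0 & 2 & 1 \\ 0 & n-2 & (n-1)/2 \\ 1 & n-1 & (n-1)/2 \end{pmatrix} \;=\; 2\cdot\tfrac{n-1}{2} - (n-2) \;=\; 1,
\]
together with analogous $3 \times 3$ determinants for the remaining sub-cones along both $H_1$ and $H_3$. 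These computations confirm that the refinement is regular even without a midpoint on the diagonal, at which point the two corollaries (minimality of the embedded toric resolution and coincidence with the embedded valuation set of \cite{hussein}) follow exactly as in the even case.
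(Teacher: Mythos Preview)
Your proposal is correct and follows essentially the same route as the paper: enumerate $\mathbb{Z}^3 \cap p_{C_{v_j}}$ for $j=5,6,7$ with the parity adjustments you describe, triangulate into a regular refinement (the paper records this via a figure rather than explicit $3\times 3$ determinants, so your determinant check is in fact more detailed), verify irreducibility, and then read the $D_n$ Dynkin graph off the skeleton with all self-intersections equal to $-2$. One small correction of geography: the edge from $(0,0,1)$ to $(2,n-2,n-1)$ is the common face of $C_{v_5}$ and $C_{v_6}$, not of $C_{v_5}$ and $C_{v_7}$; your sample determinant, involving $(1,(n-1)/2,(n-1)/2)\in p_{C_{v_6}}$, is indeed a sub-cone of $C_{v_6}$, so the computation is right and only the surrounding prose needs adjusting.
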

\begin{proof}
$i)$ The cones $C_{v_5}$, $C_{v_6}$ and $C_{v_7}$ are the maximal dimensional cones in $G(X)$. The elements for $\mathbb{Z}^3 \cap p_{C_{v_{5}}}$ are 

$\bullet$ $(1,1,2), (1,2,3), \ldots,(1,\frac{n-3}{2},\frac{n-1}{2})$

$\bullet$ $(2,0,1), (2,1,2), \ldots,(2,n-2,n-1)$

$\bullet$ $(0,0,1)$

The elements for $\mathbb{Z}^3 \cap p_{C_{v_{6}}}$ are

$\bullet$ $(0,0,1), (0,1,0),(2,n-2,n-1),(1,\frac{n-1}{2},\frac{n-1}{2})$

The elements for $\mathbb{Z}^3 \cap p_{C_{v_{7}}}$ are 

$\bullet$ $(2,0,1), (2,1,2), \ldots,(2,n-2,n-1)$

$\bullet$ $(1,1,1), (1,2,2), \ldots,(1,\frac{n-1}{2},\frac{n-1}{2})$

$\bullet$ $(1,0,0),(0,1,0)$

\noindent  A regular refinement of the Gröbner fan $G(X)$ of $X$ is
\begin{figure}[H]
    \centering
    \includegraphics[width=0.7\linewidth]{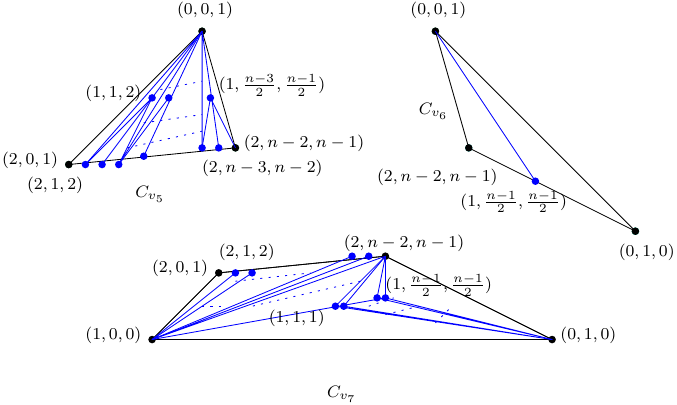}
    \caption{A regular refinement for $D_n$-singularities, n is odd}
\end{figure}

\noindent Each element in the refinement is irreducible since it cannot be written as a sum of two other elements.

$ii)$ The elements on the skeleton of $G(X)$ gives 
\begin{figure}[H]
    \centering
    \includegraphics[width=0.7\linewidth]{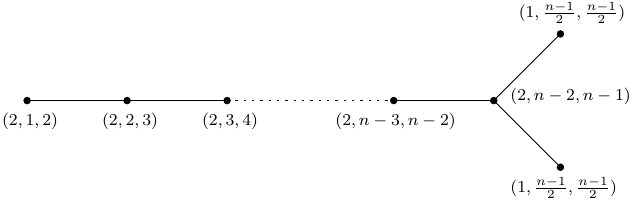}
    \caption{The resolution graph of $D_n$-singularities, n is odd}
\end{figure}
\noindent The self-intersection of each vertex is $-2$ so it has no $-1$ curves. Hence it is minimal resolution graph.
\end{proof}

\begin{cor}
 For $D_n$-singularities, $n$ is odd, the set of the vectors in the intersection of profile of the maximal dimensional Gröbner cones of $G(X)$ and $\mathbb{Z}^3$ are exactly the same as the set of embedded valuations of $X$ given in \cite{hussein}.  
\end{cor}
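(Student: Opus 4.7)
The plan is to verify the corollary by a direct combinatorial comparison of two explicitly known finite sets. On the Gröbner side, Theorem~\ref{2b}($i$) already tabulates $\mathbb{Z}^3\cap p_{C_{v_i}}$ for each of the three maximal cones $C_{v_5}$, $C_{v_6}$, $C_{v_7}$. I would first take the union of these three lists and discard the duplicates appearing along the shared faces, in particular the chain $(2,k,k+1)$ for $0\le k\le n-2$ which lies in $C_{v_5}\cap C_{v_7}$, and the apex $(1,\frac{n-1}{2},\frac{n-1}{2})$ which lies in $C_{v_6}\cap C_{v_7}$. The result is a single canonical list of primitive vectors in $\mathbb{Z}^3$.

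On the valuation side, I would recall from \cite{hussein} the embedded valuation set of $D_n$ for $n$ odd. There each essential irreducible component of the jet scheme $J_m(X)$ through the singular locus is exhibited by explicit ideal generators, and from those generators one reads off a weight vector $(a,b,c)\in \mathbb{Z}^3$ encoding the generic orders of vanishing of $x$, $y$, $z$ along the corresponding component. For the odd $D_n$ case this produces precisely $n$ vectors, which already matches the $n$ vertices of the minimal resolution graph drawn in Theorem~\ref{2b}($ii$), giving a first sanity check.

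The final step is a term-by-term comparison. The profile chain $(2,k,k+1)$ for $0\le k\le n-2$ is in obvious bijection with one family of ideals $I_m^j$ listed in \cite{hussein}, the diagonal chain $(1,k,k)$ for $1\le k\le \frac{n-1}{2}$ corresponds to a second family, and the coordinate vectors $(1,0,0)$, $(0,1,0)$, $(0,0,1)$ match the three boundary components arising from the coordinate axes of the ambient $\mathbb{C}^3$. The point requiring care is the central vector on the ridge $H_1\cap H_2$: in the even case, $p_{C_{v_6}}$ contributes two distinct diagonal vectors straddling the centre, whereas for $n$ odd it contributes the single lattice point $(1,\frac{n-1}{2},\frac{n-1}{2})$. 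The main obstacle is verifying that this single vector is precisely the valuation attached to the middle essential component of $J_m(X)$ in \cite{hussein}; this is the only genuine parity-dependent check. Once this identification is made, the remaining matches are routine, the two sets coincide, and the corollary follows.
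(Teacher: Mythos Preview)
Your strategy of direct combinatorial comparison is precisely what the paper intends: in fact the paper supplies no proof for this corollary at all, treating it (as in the parallel $A_n$ case) as an observation the reader may verify against the explicit lists in \cite{hussein,hc}. So your plan is already more explicit than the paper's own treatment, and in spirit it is the same argument.

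There is, however, an omission in your final enumeration. The list for $\mathbb{Z}^3\cap p_{C_{v_5}}$ in Theorem~\ref{2b}($i$) contains, besides the chain $(2,k,k+1)$ and the vertex $(0,0,1)$, the off-diagonal chain $(1,k,k+1)$ for $1\le k\le \frac{n-3}{2}$. This family is distinct from the diagonal chain $(1,k,k)$ sitting in $p_{C_{v_7}}$, and you do not account for it in your term-by-term matching; without it the union you form is incomplete and the claimed equality of sets cannot close. Your side remark about the even case is also slightly off: there $p_{C_{v_6}}$ contributes the single interior vector $(1,\frac{n-2}{2},\frac{n}{2})$, not two vectors straddling a centre. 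These are bookkeeping slips rather than a flaw in method; once the missing $(1,k,k+1)$ family is included and matched against the corresponding components in \cite{hussein}, the comparison goes through as you describe.
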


\begin{cor}
 The embedded toric resolution that we construct is minimal.   
\end{cor}

\begin{proof}
By Theorem \ref{2b}($i$), the elements give an embedded toric resolution and these elements are irreducible. By Theorem \ref{2b}($ii$), the resolution graph is minimal. Hence the embedded toric resolution we construct is minimal.
\end{proof}

\begin{thm}
\cite{hussein} For $D_n$-singularities, when $m\geq 2n-3$, in the $m$-jet scheme over the singular locus there are $n$ irreducible components which is equal to the number of vertices of minimal resolution graph.
\end{thm}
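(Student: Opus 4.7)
The plan is to analyse the jet scheme $J_m(X)$ for $X=V(f)$ with $f=z^2-xy^2-x^{n-1}$ directly at the level of equations, and to enumerate the irreducible components sitting over $\Sing(X)=\{0\}$ by a $t$-order stratification argument. Writing a truncated arc as $x(t)=\sum_{i=0}^{m}x_i t^i$ (and similarly for $y(t),z(t)$) and imposing $f(x(t),y(t),z(t))\equiv 0 \pmod{t^{m+1}}$ produces $m+1$ polynomial equations $F_0,\ldots,F_m$ in the coordinates $x_i,y_i,z_i$. The fibre $\pi_m^{-1}(0)$ is then obtained by further imposing $x_0=y_0=z_0=0$, and the task is to count the irreducible components of the resulting affine scheme.

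First, I would stratify $\pi_m^{-1}(0)$ by the triples of orders $(a,b,c)=(\text{ord}_t x(t),\text{ord}_t y(t),\text{ord}_t z(t))$; on each stratum the identity $z(t)^2=x(t)y(t)^2+x(t)^{n-1}$ forces a numerical balance $2c=\min(a+2b,(n-1)a)$ for a generic jet, so the admissible triples necessarily lie on the boundary of the Newton polyhedron of $f$. I would then identify these admissible triples with the primitive integer vectors on the profile of the maximal-dimensional Gröbner cones computed in Theorems \ref{2a} and \ref{2b}, and show that exactly $n$ of them contribute components meeting $\pi_m^{-1}(0)$ — the coordinate rays $(1,0,0)$ and $(0,1,0)$ being excluded because the corresponding jets escape the singular point.

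Next, for each admissible triple $(a,b,c)$ I would write down an explicit candidate ideal (following the pattern sketched in the $A_n$ list of $I_m^i$) by setting $x_0=\cdots=x_{a-1}=y_0=\cdots=y_{b-1}=z_0=\cdots=z_{c-1}=0$ and successively solving each remaining $F_j$ for the highest-index variable it newly introduces. A direct dimension count, combined with the hypothesis $m\geq 2n-3$, which is precisely large enough for the leading non-cancelling coefficient of $z(t)^2-x(t)y(t)^2-x(t)^{n-1}$ to propagate through every stratum, shows each closed subscheme $Z_{(a,b,c)}$ is irreducible and that no two of these pieces coincide or contain one another. Matching this to the $n$ vertices of the minimal resolution graph of $D_n$ (one per exceptional curve in the $D_n$ Dynkin diagram) closes the proof.

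The main obstacle will be the irreducibility/no-containment step: verifying that each $Z_{(a,b,c)}$ is irreducible and ruling out extra components arising from the \emph{mixed} jets where $a+2b=2c$ and $(n-1)a=2c$ hold simultaneously, together with the parity separation between the even and odd cases already visible in Theorems \ref{2a} and \ref{2b}. The threshold $m\geq 2n-3$ is sharp here because this is exactly the truncation length required for the compact edge of $NP(f)$ to be fully resolved by the $F_j$; below it, low-order cancellations leave room for additional irreducible pieces, so the delicate bookkeeping of which $F_j$ determines which variable is where the argument needs the most care.
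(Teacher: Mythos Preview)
The paper does not prove this statement at all: it is stated as a result quoted from \cite{hussein} (Mourtada, \emph{Jet schemes of rational double point singularities}), with no argument given beyond the citation. There is therefore no ``paper's own proof'' to compare your proposal against; in this survey the theorem functions purely as background, parallel to the analogous cited statements for $A_n$, $E_6$, $E_7$, $E_8$ that appear at the end of each subsection.

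That said, your sketch is broadly in the spirit of the actual proof in \cite{hussein}: stratifying $\pi_m^{-1}(0)$ by the contact orders $(\mathrm{ord}_t x,\mathrm{ord}_t y,\mathrm{ord}_t z)$ and tracking which strata survive as irreducible components is indeed Mourtada's method. Two cautions, however. First, your appeal to Theorems~\ref{2a} and~\ref{2b} to identify the admissible triples is anachronistic and somewhat circular: those theorems belong to the present paper and postdate \cite{hussein}, whereas the jet-scheme count is an input to (not a consequence of) the resolution picture developed here. Second, the part you flag as ``the main obstacle'' --- irreducibility of each $Z_{(a,b,c)}$ and exclusion of containments, especially at the mixed strata where $a+2b=(n-1)a=2c$ --- is exactly where the real work lies in \cite{hussein}, and your proposal does not go beyond naming it. A genuine proof requires explicit generators for the ideals $I_m^i$ and a careful induction on $m$ to show no further splitting occurs once $m\geq 2n-3$; without that, the sketch remains a plausible outline rather than a proof.
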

 \noindent In \cite{koreda}, Y. Koreeda gave the construction for $D_4$-singularity. For $n > 4$, showing the construction is still an open question.

\subsection{$E_6$-singularity}
Consider the hypersurface $X=V(f)\subset \mathbb{C}^3$ where $$f(x,y,z)=z^2+y^3+x^4$$ The Gröbner cones are $$C_{v_1}(f)=\langle (3,4,6) \rangle$$
$$C_{v_2}(f)=\langle (1,0,0),(3,4,6) \rangle$$
$$C_{v_3}(f)=\langle (0,0,1),(3,4,6) \rangle$$
$$C_{v_4}(f)=\langle (0,1,0),(3,4,6) \rangle$$
$$C_{v_5}(f)=\langle (1,0,0),(0,0,1),(3,4,6) \rangle$$
$$C_{v_6}(f)=\langle (0,1,0),(0,0,1),(3,4,6) \rangle$$
$$C_{v_7}(f)=\langle (1,0,0),(0,1,0),(3,4,6) \rangle$$ such that $In_{v_1}(f)=f$, $In_{v_2}(f)=z^2+y^3$, $In_{v_3}(f)=y^3+x^4$, $In_{v_4}(f)=z^2+x^4$, $In_{v_5}(f)=y^3$, $In_{v_6}(f)=x^4$ and $In_{v_7}(f)=z^2$. The Gröbner fan $G(X)$ of $X$ is the union $\cup_{i=1}^{7}C_{v_i}$.

\begin{figure}[H]
    \centering
    \includegraphics[width=0.9\linewidth]{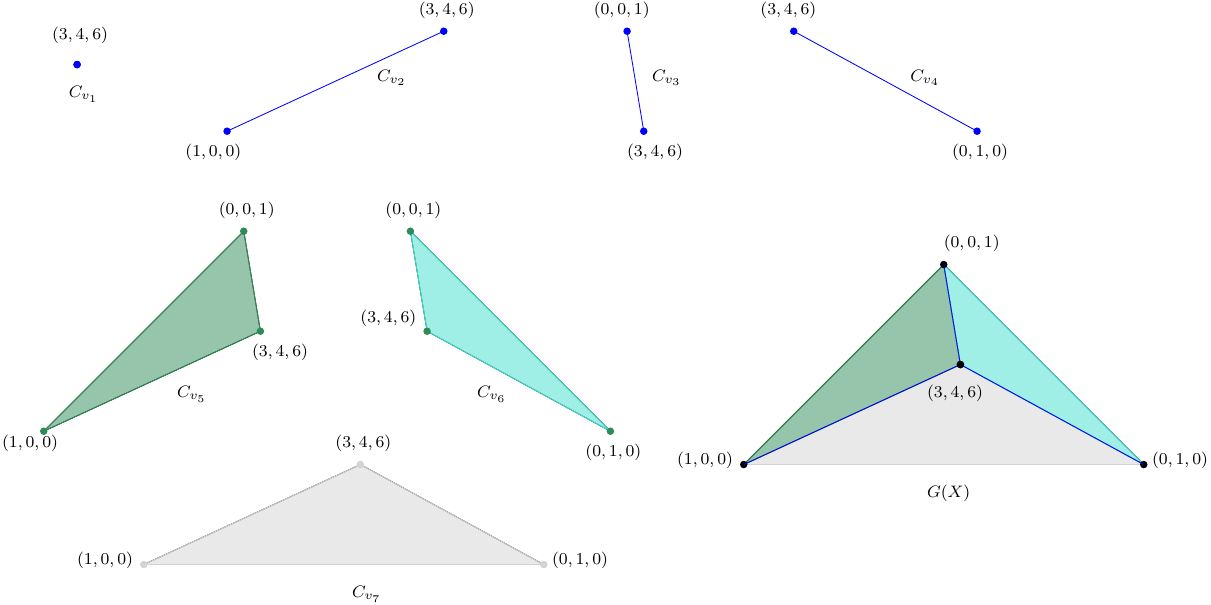}
    \caption{The Gröbner fan and its Gröbner cones for $E_6$-singularity}
\end{figure}

\begin{rem}
The Gröbner fan $G(X)$ of $X$ is same as its dual Newton polyhedron \cite{hc}.
\end{rem} 
\noindent For each maximal dimensional cones ($C_{v_{4}}$, $C_{v_{5}}$ and $C_{v_{6}}$) of $G(X)$, the profiles $p_{C_{v_{4}}}$, $p_{C_{v_{5}}}$ and  $p_{C_{v_{6}}}$ are given as 

\begin{figure}[H]
    \centering
    \includegraphics[width=0.9\linewidth]{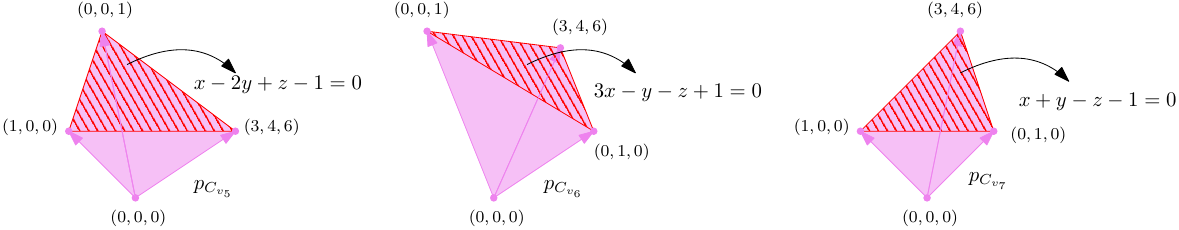}
    \caption{The profiles of maximal dimensional Gröbner cones for $E_6$-singularity}
\end{figure}

\noindent The boundaries of $p_{C_{v_{5}}}$, $p_{C_{v_{6}}}$ and  $p_{C_{v_{7}}}$ are $H_1:x-2y+z-1=0$, $H_2:3x-y-z+1=0$ and $H_3:x+y-z-1=0$ respectively.

\begin{thm} \label{3}
$i)$ For $E_6$-singularity, the elements of the set consisting of $\mathbb{Z}^3 \cap p_{C_{v_{i}}}$ give an embedded toric resolution of $X$ where $C_{v_i}$ is a maximal dimensional Gröbner cone in $G(X)$. Moreover these elements are irreducible means they are free over $\mathbb{Z}$.

$ii)$ For $E_6$-singularity, the elements on the skeleton of $G(X)$ give minimal resolution graph of the singularity.
\end{thm}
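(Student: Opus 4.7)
My plan is to mirror the strategy of Theorems \ref{1}, \ref{2a}, and \ref{2b}. First I would enumerate the lattice points in $\mathbb{Z}^3 \cap p_{C_{v_i}}$ for each of the three maximal-dimensional cones $C_{v_5}, C_{v_6}, C_{v_7}$, then show that together with the extremal rays they determine a regular refinement of $G(X)$, and finally read off the resolution graph from the skeleton of this refinement.

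Since each of $C_{v_5}, C_{v_6}, C_{v_7}$ is simplicial and its three extremal vectors lie on a single affine hyperplane --- namely $H_1 \colon x-2y+z=1$, $H_2 \colon 3x-y-z=-1$, and $H_3 \colon x+y-z=1$, as recorded just before the statement --- each profile is a triangle cut out by one linear equation. I would parametrize each triangle by barycentric coordinates in its three extremal vectors and enumerate the finitely many integer solutions. For part (i), once these lists are in hand, the subdivision of $G(X)$ is built by triangulating each profile with its enumerated lattice points as vertices; regularity of each resulting $3$-cone reduces to a $3 \times 3$ determinant computation, and irreducibility of each new ray amounts to checking that its primitive generator is not a nonnegative integer sum of two others in the list.

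For part (ii), the skeleton of the refined fan yields a graph whose vertices are the primitive rays lying strictly inside the positive orthant and whose edges correspond to shared $2$-faces of the refinement. From the known structure of $E_6$ I expect exactly six interior rays, yielding a graph combinatorially isomorphic to the $E_6$ Dynkin diagram (a chain of five vertices with one extra branch vertex). The self-intersection $-s$ of a vertex $v$ is then determined by the relation $s\, v = \sum v_i$ summed over its neighbors; I expect $s = 2$ at every vertex, so the graph contains no $(-1)$-curve and is therefore minimal.

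The main obstacle is the coherent gluing of the three separately-constructed refinements. The $2$-faces shared between adjacent maximal cones --- for instance the face $\langle (0,0,1),(3,4,6)\rangle$ shared by $C_{v_5}$ and $C_{v_6}$, and the face $\langle (1,0,0),(3,4,6)\rangle$ shared by $C_{v_5}$ and $C_{v_7}$ --- must receive identical $1$-dimensional subdivisions from the profile-based procedure applied on each side; otherwise the union fails to be a fan. Once this consistency is verified, the remaining determinant and irreducibility checks are routine, and the resolution graph in (ii) is read off directly from the combinatorics of the refinement.
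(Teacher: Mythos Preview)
Your proposal is correct and follows essentially the same approach as the paper: enumerate the lattice points $\mathbb{Z}^3\cap p_{C_{v_i}}$ for $i=5,6,7$, exhibit a regular triangulation of each maximal cone on these points, and read the $E_6$ graph from the six interior rays with their $-2$ self-intersections. The paper's proof is terser---it simply lists the lattice points explicitly (five in $p_{C_{v_5}}$, five in $p_{C_{v_6}}$, seven in $p_{C_{v_7}}$, including extremal vectors) and displays the refinement and resolution graph as figures---while you spell out the barycentric enumeration, the determinant checks, and the gluing compatibility along the shared $2$-faces, which the paper leaves implicit in the picture.
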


\begin{proof}
$i)$ The cones $C_{v_5}$, $C_{v_6}$ and $C_{v_7}$ are the maximal dimensional cones in $G(X)$. The elements for $\mathbb{Z}^3 \cap p_{C_{v_{5}}}$ are 

$\bullet$ $(1,0,0),(0,0,1),(3,4,6),(1,1,2),(2,2,3)$

The elements for $\mathbb{Z}^3 \cap p_{C_{v_{6}}}$ are

$\bullet$ $(0,1,0),(0,0,1),(3,4,6),(1,1,2),(2,3,4)$

The elements for $\mathbb{Z}^3 \cap p_{C_{v_{7}}}$ are

$\bullet$ $(1,0,0),(0,1,0),(3,4,6),(1,1,1),(1,2,2),(2,2,3),(2,3,4)$

\noindent A regular refinement of the Gröbner fan $G(X)$ of $X$ is
\begin{figure}[H]
    \centering
    \includegraphics[width=0.6\linewidth]{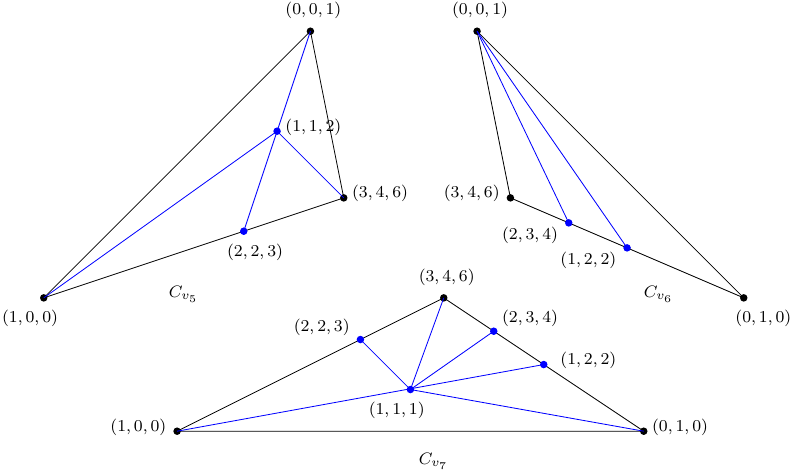}
    \caption{A regular refinement for $E_6$-singularity}
\end{figure}

\noindent Each element in the refinement is irreducible since it cannot be written as a sum of two other elements.

$ii)$ The elements on the skeleton of $G(X)$ gives 
\begin{figure}[H]
    \centering
    \includegraphics[width=0.6\linewidth]{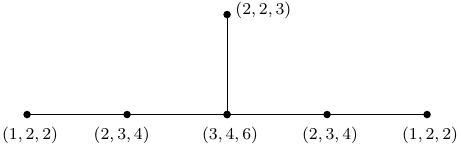}
    \caption{The resolution graph of $E_6$-singularity}
\end{figure}
\noindent The self-intersection of each vertex is $-2$ so it has no $-1$ curves. Hence it is minimal resolution graph.
\end{proof}

\begin{cor}
For $E_6$-singularity, the set of the vectors in the intersection of profile of the maximal dimensional Gröbner cones of $G(X)$ and $\mathbb{Z}^3$ are exactly same as the set of embedded valuations of $X$ given in \cite{hussein}. 
\end{cor}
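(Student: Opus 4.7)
The plan is to prove the corollary by direct combinatorial comparison of two explicit finite sets. On one side, the proof of Theorem \ref{3}($i$) already lists every lattice point sitting inside $p_{C_{v_5}}$, $p_{C_{v_6}}$, $p_{C_{v_7}}$, so the union
\[
S:=\bigl(\mathbb{Z}^3\cap p_{C_{v_5}}\bigr)\cup\bigl(\mathbb{Z}^3\cap p_{C_{v_6}}\bigr)\cup\bigl(\mathbb{Z}^3\cap p_{C_{v_7}}\bigr)
\]
is completely determined: it consists of the three coordinate primitives $(1,0,0)$, $(0,1,0)$, $(0,0,1)$, the Newton weight $(3,4,6)$, and the five interior vectors $(1,1,1)$, $(1,1,2)$, $(1,2,2)$, $(2,2,3)$, $(2,3,4)$. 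The other side is the embedded valuation set $V_{E_6}$ of \cite{hussein}, which is assembled from the weight vectors attached to the distinguished irreducible components of the jet schemes $J_m(X)$ lying over $\Sing(X)$. So I would first write $S$ and $V_{E_6}$ as explicit finite lists and then verify $S=V_{E_6}$ element by element.

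For the comparison step I would invoke the results recalled in the $A_n$ and $D_n$ subsections: in \cite{hussein} one finds, for each $E_6$-component of $J_m(X)$, a distinguished generator of the form $\pmb{x}^{\alpha}$, $\pmb{y}^{\beta}$ or $\pmb{z}^{\gamma}$, and the associated weight is read off from the monomial. Listing these weights produces precisely the nine vectors above: the three axial weights correspond to the components supported on the ambient coordinate hyperplanes, $(3,4,6)$ is the Newton weight from the face $\{2z,\,3y,\,4x\}$ of $NP(f)$, and the five intermediate vectors match the Dynkin-$E_6$ nodes of the minimal resolution. Since this matching is a finite check, the corollary will follow.

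The only real verification to carry out is completeness of the enumeration of $\mathbb{Z}^3\cap p_{C_{v_i}}$. I would do this by using the boundary hyperplanes $H_1\colon x-2y+z-1=0$, $H_2\colon 3x-y-z+1=0$, $H_3\colon x+y-z-1=0$ recorded just before Theorem~\ref{3}: for each profile I solve the hyperplane equation for one coordinate, restrict the remaining coordinates to the bounded triangular region cut out by the extremal rays, and list all integer solutions. Because $p_{C_{v_5}}$, $p_{C_{v_6}}$ and $p_{C_{v_7}}$ each lie on a single hyperplane, this is a two-variable lattice-point enumeration inside a triangle, and the counts match the lists quoted in the proof of Theorem~\ref{3}($i$).

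The main (mild) obstacle is ensuring that no interior lattice point is missed and, symmetrically, that no element of $V_{E_6}$ from \cite{hussein} has been overlooked; once both lists are written out, the identification is forced. A useful sanity check is that $|S|=9$ and that the five non-axial, non-apex vectors in $S$ bijectively correspond to the five non-central nodes of the $E_6$ Dynkin diagram, consistent with the skeleton computation in part~($ii$) of Theorem~\ref{3}.
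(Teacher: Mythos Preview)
Your approach is correct and matches the paper's: the paper gives no explicit proof of this corollary, treating it (as in the analogous $A_n$ case, where it writes ``This fact can be seen combinatorially by looking at \cite{hussein, hc}'') as an immediate consequence of comparing the explicit list from Theorem~\ref{3}($i$) with the embedded valuation set of \cite{hussein}. Your proposal simply spells out this finite comparison in more detail than the paper does.
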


\begin{cor}
 The embedded toric resolution that we construct is minimal.   
\end{cor}

\begin{proof}
By Theorem \ref{3}($i$), the elements give an embedded toric resolution and these elements are irreducible. By Theorem \ref{3}($ii$), the resolution graph are minimal. Hence the embedded toric resolution is minimal.  
\end{proof}

\begin{thm}
\cite{hussein} For $E_6$-singularity , when $m \geq 11$, in the $m$-jet scheme over the singular locus there are $six$ irreducible components which is equal to the number of vertices of its minimal resolution graph.
\end{thm}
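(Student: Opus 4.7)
The plan is to unpack the $m$-jet scheme equations directly. Writing $x(t)=\sum_{i=0}^{m}x_i t^i$, $y(t)=\sum_{i=0}^{m}y_i t^i$, $z(t)=\sum_{i=0}^{m}z_i t^i$ and substituting into $f=z^2+y^3+x^4$, the coefficient of $t^k$ produces one polynomial $F_k \in \mathbb{C}[x_0,\ldots,x_m,y_0,\ldots,y_m,z_0,\ldots,z_m]$ for each $0 \le k \le m$. The jet scheme over the singular point is cut out by these together with $x_0=y_0=z_0=0$. I would first write out the $F_k$ explicitly for small $k$ to see the recursive pattern: each $F_k$ is a sum of monomials weighted by the partitions of $k$ into three, four, and two parts respectively.

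Next, I would classify arcs by their valuation triple $(a,b,c):=(\mathrm{ord}_t x,\mathrm{ord}_t y,\mathrm{ord}_t z)$. For the arc to lie on $X$, the leading terms of $z^2$, $y^3$, $x^4$ must cancel in pairs, which restricts $(a,b,c)$ to a finite list. A short combinatorial check, using that the maximal weight is $(3,4,6)$, pins down exactly the six vectors $(1,1,1)$, $(1,2,2)$, $(1,1,2)$, $(2,2,3)$, $(2,3,4)$, $(3,4,6)$ that appear on the skeleton of $G(X)$ in Section~4.3. For each weight $(a,b,c)$ I would then form the candidate ideal $I_m^{(a,b,c)}$ generated by $x_0=\cdots=x_{a-1}=0$, $y_0=\cdots=y_{b-1}=0$, $z_0=\cdots=z_{c-1}=0$, augmented by whichever $F_k$ become nontrivially enforced. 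The point is to show that after these vanishings, the remaining $F_k$ either vanish identically or solve for a single leading coordinate in terms of the others, so $V(I_m^{(a,b,c)})$ projects isomorphically onto an affine space and is therefore irreducible.

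The next step is to verify exhaustiveness: every closed point of $J_m(X)$ lying over the singular locus belongs to one of the six $V(I_m^{(a,b,c)})$. I would argue inductively on the level, using that if the actual valuation of an arc is larger than the one prescribed by a candidate vector, the arc falls into the closure of a component with a strictly larger weight. The bound $m \geq 11$ enters here: the largest weight has $c=6$, and to see the full pattern one must reach $F_k$ with $k$ close to $2c=12$; the precise threshold $m\geq 11$ is what guarantees that all six generic loci have already separated in $J_m(X)$. I would also check pairwise non-inclusion by exhibiting, for each pair $(a,b,c)\neq (a',b',c')$, a coordinate that is identically zero on one component but not the other.

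The main obstacle will be the exhaustiveness step, that is, proving that no additional irreducible component hides among the deeper jet equations. This requires carefully tracking how higher-order vanishings of $x$, $y$, or $z$ force the $F_k$ to impose new equations that collapse back into the closure of one of the six listed components, rather than producing an independent locus. Once exhaustiveness and irreducibility are in hand, comparing with the six vertices of the $E_6$ Dynkin diagram is immediate from the skeleton description of $G(X)$ established in Theorem~\ref{3}.
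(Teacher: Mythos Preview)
The paper does not prove this statement; it is quoted verbatim from \cite{hussein} with no argument supplied here. There is therefore no proof in the present paper to compare your proposal against.

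That said, your outline has a concrete gap. You assert that the six valuation triples are pinned down by requiring the leading terms of $z^2$, $y^3$, $x^4$ to ``cancel in pairs,'' i.e.\ that $\min(4a,3b,2c)$ be attained by at least two of the three quantities. But two of the vectors you list fail this test: for $(a,b,c)=(1,1,1)$ one has $(4a,3b,2c)=(4,3,2)$, and for $(a,b,c)=(1,1,2)$ one has $(4,3,4)$; in each case the minimum is achieved by a single monomial. Concretely, an arc with $\mathrm{ord}_t(x,y,z)=(1,1,1)$ would give $F_2=z_1^{2}$ (since $y^3$ and $x^4$ contribute nothing below order~$3$), hence $z_1=0$ and $\mathrm{ord}_t z\geq 2$; similarly $(1,1,2)$ forces $F_3=y_1^{3}=0$, hence $\mathrm{ord}_t y\geq 2$. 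So no arc on $X$ has precisely these contact orders, and your candidate ideals $I_m^{(a,b,c)}$ do not cut out the corresponding components in the way you describe. (Incidentally, these two vectors are not ``on the skeleton of $G(X)$'' either: they lie in the interiors of the $3$-dimensional cones $C_{v_7}$ and $C_{v_5}$ respectively.) The decomposition in \cite{hussein} is obtained by a finer inductive analysis of the equations $F_k$, tracking how successive relations among the jet coordinates carve out the six loci; the leading-term heuristic alone does not produce the correct list, and identifying the six prime ideals is precisely the content your sketch is still missing.
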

\subsection{$E_7$-singularity}
Consider the hypersurface $X=V(f) \subset \mathbb{C}^3$ where $$f(x,y,z)=x^2+y^3+yz^3$$ The Gröbner cones are $$C_{v_1}(f)=\langle (9,6,4) \rangle$$
$$C_{v_2}(f)=\langle (1,0,0),(9,6,4) \rangle$$
$$C_{v_3}(f)=\langle (0,0,1),(9,6,4) \rangle$$
$$C_{v_4}(f)=\langle (1,2,0),(9,6,4) \rangle$$
$$C_{v_5}(f)=\langle (1,0,0),(0,0,1),(9,6,4) \rangle$$
$$C_{v_6}(f)=\langle (0,1,0),(0,0,1),(1,2,0),(9,6,4) \rangle$$
$$C_{v_7}(f)=\langle (1,0,0),(1,2,0),(9,6,4) \rangle$$ suct that $In_{v_1}(f)=f$, $In_{v_2}(f)=y^3+yz^3$, $In_{v_3}(f)=x^2+y^3$, $In_{v_4}(f)=x^2+yz^3$, $In_{v_5}(f)=y^3$, $In_{v_6}(f)=x^2$ and $In_{v_7}(f)=yz^3$. The Gröbner fan $G(X)$ of $X$ is the union $\cup_{i=1}^{7}C_{v_i}$.

\begin{figure}[H]
    \centering
    \includegraphics[width=0.9\linewidth]{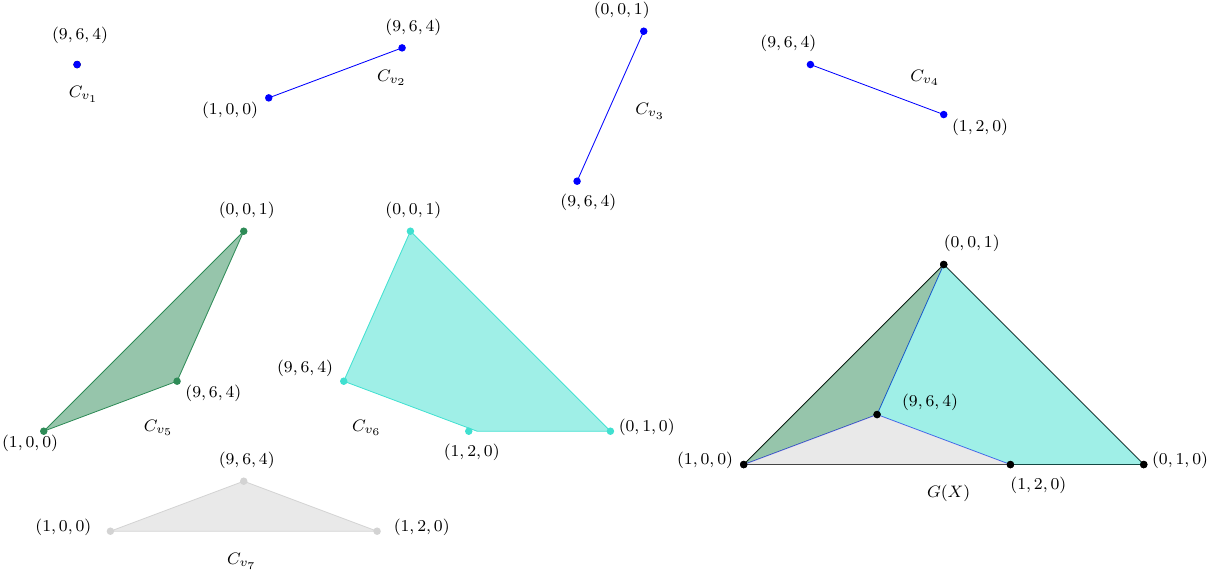}
    \caption{The Gröbner fan and its Gröbner cones for $E_7$-singularity}
\end{figure}

\begin{rem}
 The Gröbner fan $G(X)$ of $X$ is same as its dual Newton polyhedron \cite{hc}.   
\end{rem}
\noindent For each maximal dimensional cones ($C_{v_{5}}$, $C_{v_{6}}$ and $C_{v_{7}}$) of $G(X)$, the profiles $p_{C_{v_{5}}}$, $p_{C_{v_{6}}}$ and  $p_{C_{v_{7}}}$ are given as 
\begin{figure}[H]
    \centering
    \includegraphics[width=0.9\linewidth]{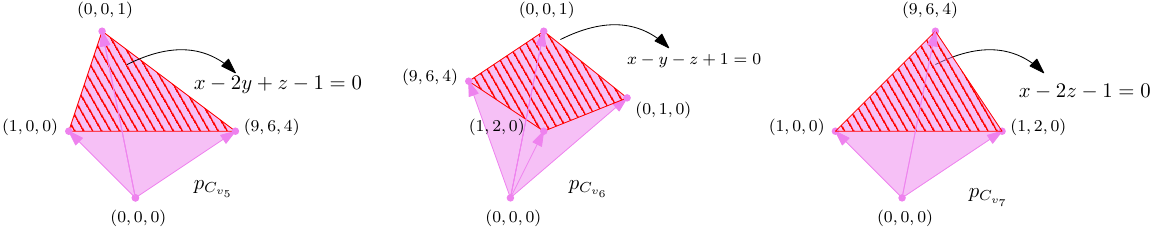}
    \caption{The profiles of maximal dimensional Gröbner cones for $E_7$-singularity}
\end{figure}

\noindent The boundaries of $p_{C_{v_{5}}}$, $p_{C_{v_{6}}}$ and  $p_{C_{v_{7}}}$ are $H_1: x-2y+z-1=0$, $H_2: x-y-z+1=0$ and $H_3: x-2z-1=0$ respectively. Note that although $C_{v_6}$ is a non-simplicial cone, all its extremal vectors are on a unique hyperplane.

\begin{thm}\label{4}
$i)$ For $E_7$-singularity, the elements of the set consisting of $\mathbb{Z}^3 \cap p_{C_{v_{i}}}$ give an embedded toric resolution of $X$where $C_{v_i}$ is a maximal dimensional Gröbner cone in $G(X)$. Moreover these elements are irreducible means they are free over $\mathbb{Z}$.

$ii)$ For $E_7$-singularity, the elements on the skeleton of $G(X)$ give minimal resolution graph of the singularity.
\end{thm}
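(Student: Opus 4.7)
The plan is to mirror the strategy already established for the $A_n$, $D_n$, and $E_6$ cases: for each of the three maximal dimensional Gröbner cones $C_{v_5}$, $C_{v_6}$, $C_{v_7}$, I would first enumerate the lattice points $\mathbb{Z}^3\cap p_{C_{v_i}}$ by using the explicit equations of the bounding hyperplanes $H_1: x-2y+z-1=0$, $H_2: x-y-z+1=0$, and $H_3: x-2z-1=0$. Because the extremal vectors of each of these cones lie on a single hyperplane (including the non-simplicial cone $C_{v_6}$, as noted in the remark preceding the theorem), the profile is a bounded triangular or quadrilateral region and the lattice point count is finite and easy to list by scanning the possible values of the coordinates bounded by the hyperplane equation together with the positivity constraint.

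For part (i), once the lattice points are listed, I would use them to subdivide $G(X)$ and check that every resulting three-dimensional cone is regular, i.e.\ the determinant of its generating matrix is $\pm 1$. This is the main technical step: it reduces to a finite list of $3\times 3$ determinant computations. By the motivating theorem (from \cite{AGS, ar-hu}) quoted in the introduction, any regular refinement compatible with the Gröbner fan yields an embedded toric resolution, so once regularity is verified the resolution statement follows. Irreducibility of each generator $v$ in the refinement—meaning $v$ cannot be written as $v = v' + v''$ with $v', v''$ nonzero integer vectors lying in the same cone—is checked coordinatewise; because the generators are primitive and lie on the boundary hyperplanes of the profiles, no nontrivial decomposition into elements of the refinement is possible.

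For part (ii), I would read off the resolution graph from the skeleton of $G(X)$, where vertices correspond to the new rays in the refinement and edges record adjacency in the refined fan. The self-intersection number $-s$ of a vertex $v$ is determined from the relation $s\cdot v=\sum_{i=1}^{m}v_i$, summed over the rays $v_i$ adjacent to $v$. I would verify this relation holds with $s=2$ for every interior vertex produced by the profile lattice points; since all self-intersections equal $-2$, there are no $(-1)$-curves, giving minimality of the resolution graph.

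The main obstacle I expect is the regularity verification together with the simultaneous bookkeeping of how the lattice points produced from $p_{C_{v_5}}$, $p_{C_{v_6}}$, and $p_{C_{v_7}}$ glue along the shared two-dimensional faces of the maximal cones: a lattice point on such a shared face must appear in both profiles, and the refinement must remain a fan (cones intersect in common faces). Handling the non-simplicial cone $C_{v_6}$ is the subtlest part, because its quadrilateral profile requires choosing a triangulation compatible with the chosen lattice points and with the neighboring triangulations of $p_{C_{v_5}}$ and $p_{C_{v_7}}$; but since all extremal vectors of $C_{v_6}$ lie on the single hyperplane $H_2$, this triangulation is essentially forced and the regularity check becomes a manageable finite computation.
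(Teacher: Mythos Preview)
Your proposal is correct and follows essentially the same approach as the paper: enumerate the lattice points $\mathbb{Z}^3\cap p_{C_{v_i}}$ for each maximal cone using the bounding hyperplanes $H_1,H_2,H_3$, exhibit the resulting regular refinement (the paper does this by explicitly listing the points and displaying the refinement in a figure), invoke the theorem of \cite{AGS,ar-hu} for the resolution statement, and then verify irreducibility and the $-2$ self-intersections on the skeleton. Your anticipated difficulty with gluing along shared faces and triangulating the non-simplicial cone $C_{v_6}$ is handled in the paper exactly as you predict---all four extremal vectors of $C_{v_6}$ lie on $H_2$, so the triangulation is forced and the regularity check is a finite determinant computation.
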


\begin{proof}
$i)$ The cones $C_{v_5}$, $C_{v_6}$ and $C_{v_7}$ are the maximal dimensional cones in $G(X)$.  The elements for $\mathbb{Z}^3 \cap p_{C_{v_{5}}}$ are 

$\bullet$ $(1,0,0),(0,0,1),(9,6,4),(6,4,3),(5,3,2),(3,2,2),(2,1,1)$

The elements for $\mathbb{Z}^3 \cap p_{C_{v_{6}}}$ are

$\bullet$ $(0,1,0),(0,0,1),(9,6,4),(1,2,0),(7,5,3),(5,4,2),(3,3,1),(6,4,3),(3,2,2)$

$\bullet$ $(4,3,2),(2,2,1),(1,1,1)$

The elements for $\mathbb{Z}^3 \cap p_{C_{v_{7}}}$ are

$\bullet$ $(1,0,0),(1,2,0),(9,6,4),(1,1,1),(3,2,1),(5,3,2),(7,5,3),(5,4,2),(3,3,1)$

\noindent A regular refinement of the Gröbner fan $G(X)$ of $X$ is
\begin{figure}[H]
    \centering
    \includegraphics[width=0.6\linewidth]{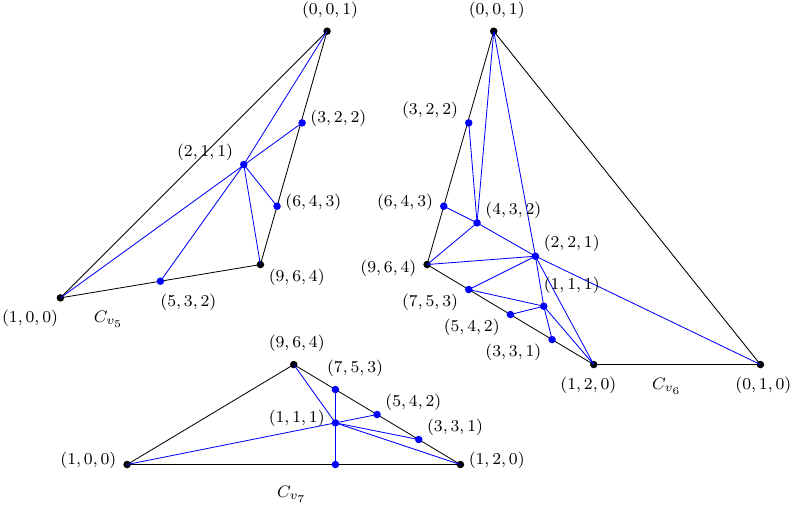}
    \caption{A regular refinement for $E_7$-singularity}
\end{figure}

\noindent Each element is irreducible since it cannot be written as a sum of two other elements.

$ii)$ The elements on the skeleton of $G(X)$ gives 
\begin{figure}[H]
    \centering
    \includegraphics[width=0.6\linewidth]{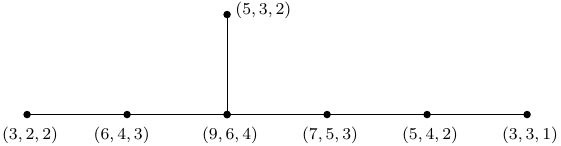}
    \caption{The resolution graph of $E_7$-singularity}
\end{figure}

\noindent The self-intersection of each vertex is $-2$ so it has no $-1$ curves. Hence it is minimal resolution graph.
\end{proof}

\begin{cor}
For $E_7$-singularity, the set of the vectors in the intersection of profile of the maximal dimensional Gröbner cones of $G(X)$ and $\mathbb{Z}^3$ are exactly same as the set of embedded valuations of $X$ given in \cite{hussein}. 
\end{cor}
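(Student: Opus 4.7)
The plan is to verify the equality of two finite sets of lattice vectors in $\mathbb{Z}^3$: on one side, the set $S_1 := \bigcup_{i\in\{5,6,7\}} \bigl(\mathbb{Z}^3 \cap p_{C_{v_i}}\bigr)$ obtained from the three maximal dimensional Gröbner cones in Theorem \ref{4}(i); on the other, the set $S_2$ of embedded valuations of $X$ listed in \cite{hussein}. Since each set is explicitly given as a finite list, the statement reduces to a direct combinatorial comparison.

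First, I would assemble $S_1$ by collecting the three lists of vectors displayed in the proof of Theorem \ref{4}(i) into a single set, eliminating duplicates that arise because common vectors (such as $(9,6,4)$, the extremal vector interior to the Gröbner fan, and the vectors $(3,2,2)$, $(5,3,2)$, $(7,5,3)$, $(5,4,2)$, $(3,3,1)$, $(1,1,1)$ shared along the intersections of the profiles) occur in more than one $p_{C_{v_i}}$. I would also drop the rays $(1,0,0)$, $(0,1,0)$, $(0,0,1)$, $(1,2,0)$ of the ambient cone since embedded valuations typically refer to the new vectors added by the resolution, and check this convention against \cite{hussein}.

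Next, I would reproduce $S_2$ from \cite{hussein}: the embedded valuations of the $E_7$-singularity are the vectors attached to the irreducible components of its $m$-jet schemes for $m \geq$ the stabilizing threshold. For $E_7$ there are seven such new valuation vectors, matching the seven vertices of the minimal resolution graph, and in \cite{hussein} they are given explicitly in $\mathbb{Z}^3$. I would tabulate these vectors and then match them one-to-one with the new vectors appearing in $S_1$.

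The main obstacle is the bookkeeping, not any deep argument: because the $E_7$ profiles are larger and less symmetric than in the $A_n$, $D_n$, or $E_6$ cases, the enumeration is longer and the shared-boundary vectors must be tracked carefully to avoid double counting. Once the lists are laid out, the equality $S_1 = S_2$ is visually immediate, and the corollary follows. As in the preceding cases, a short combinatorial confirmation following the pattern used for $A_n$ and $D_n$ suffices, and no further geometric input is required beyond Theorem \ref{4} and the explicit valuation data from \cite{hussein}.
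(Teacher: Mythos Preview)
Your approach is essentially what the paper does: the paper gives no separate proof of this corollary (just as for the analogous corollaries in the $A_n$, $D_n$, $E_6$, and $E_8$ cases), treating it as a direct combinatorial observation obtained by comparing the explicit list from Theorem~\ref{4}(i) against the list in \cite{hussein}. For $A_n$ the paper even says so explicitly: ``This fact can be seen combinatorially by looking at \cite{hussein, hc}.'' So your plan of tabulating both finite sets and matching them is exactly right.

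One correction, however. You write that ``for $E_7$ there are seven such new valuation vectors, matching the seven vertices of the minimal resolution graph.'' This conflates two different objects. The seven vertices of the resolution graph are the vectors lying on the \emph{skeleton} of $G(X)$, the ones isolated in part~(ii) of Theorem~\ref{4}. The embedded valuation set of \cite{hussein}, by contrast, is the full collection of lattice vectors used in the regular refinement, i.e.\ all of $\bigcup_i (\mathbb{Z}^3\cap p_{C_{v_i}})$. For $E_7$ this union contains, after removing duplicates, the interior vectors
\[
(9,6,4),\ (6,4,3),\ (5,3,2),\ (3,2,2),\ (2,1,1),\ (7,5,3),\ (5,4,2),\ (3,3,1),\ (4,3,2),\ (2,2,1),\ (1,1,1),\ (3,2,1)
\]
together with the boundary rays $(1,0,0),(0,1,0),(0,0,1),(1,2,0)$, which is considerably more than seven. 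The comparison you propose remains purely combinatorial and correct; you simply need to match against this longer list rather than a set of size seven.
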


\begin{cor}
 The embedded toric resolution that we construct is minimal.   
\end{cor}

\begin{proof}
By Theorem \ref{4}($i$), the elements give an embedded toric resolution and these elements are irreducible. By Theorem \ref{4}($ii$), the resolution graph is minimal. Hence the embedded toric resolution is minimal.
\end{proof}

\begin{thm}
\cite{hussein} For $E_7$-singularity , when $m \geq 17$, in the $m$-jet scheme over the singular locus there are $seven$ irreducible components which is equal to the number of vertices of its minimal resolution graph.
\end{thm}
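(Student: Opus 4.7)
The plan is to analyze $J_m(X)$ over the singular locus directly from its defining equations and then match its irreducible components to the seven essential weighted valuations that are already visible in the Gröbner-fan picture of the previous subsection.

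Setting up, I would substitute arcs $x(t)=\sum_{i\geq 0} x_i t^i$, $y(t)=\sum_{i\geq 0} y_i t^i$, $z(t)=\sum_{i\geq 0} z_i t^i$ into $f=x^2+y^3+yz^3$ and collect coefficients of $t^0,\ldots,t^m$ to obtain polynomial equations $F_0,\ldots,F_m$ in $\mathbb{C}[x_0,\ldots,x_m,y_0,\ldots,y_m,z_0,\ldots,z_m]$ cutting out $J_m(X)$. Imposing $x_0=y_0=z_0=0$ places us over the singular locus and kills the low-order $F_k$ automatically; the remaining system is the object to decompose.

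For the decomposition, for each weighted valuation $v=(a,b,c)$ appearing in the profile analysis of Theorem \ref{4}, introduce the stratum $Z_v\subset J_m(X)$ where $x_i=0$ for $i<a$, $y_i=0$ for $i<b$, $z_i=0$ for $i<c$, with $x_a,y_b,z_c$ non-zero. On $Z_v$ the first non-trivial equation has leading part $In_v(f)(x_a,y_b,z_c)=0$, followed by equations that are affine in each successive coefficient modulo the previous ones. Newton non-degeneracy of $f$ implies that $In_v(f)$ defines a smooth subvariety in the torus of leading coefficients, making $\overline{Z_v}$ irreducible of predictable dimension and structured as a tower of affine bundles over this torus piece.

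Next I would identify the seven essential $v$'s corresponding to the seven vertices of the $E_7$ resolution graph of Theorem \ref{4}(ii), and verify that the seven closures $\overline{Z_v}$ are pairwise non-included by comparing their generic leading behaviors. The step I expect to be the main obstacle is showing that every other weight pattern $w$ is absorbed: either $In_w(f)$ degenerates to a pure monomial, in which case $Z_w$ sits inside a deeper coordinate locus, or the containment of the corresponding cones in the Gröbner subdivision forces $\overline{Z_w}\subset \overline{Z_v}$ for some essential $v$. This combinatorial absorption analysis, guided by the refinement drawn in the proof of Theorem \ref{4}, is what forces the count to be exactly seven and not larger.

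Finally, the bound $m\geq 17$ should drop out of the deepest essential valuation $v=(9,6,4)$, for which $o_v(f)=\min(2\cdot 9,\, 3\cdot 6,\, 6+3\cdot 4)=18$: one needs $m\geq o_v(f)-1=17$ so that either the critical equation $F_{o_v(f)}$ lies above the jet level or is precisely the one enforcing the single non-degeneracy condition on the leading coefficients, so that the associated stratum is non-empty, distinct from the others, and irreducible of the expected dimension. Matching the seven resulting components to the vertices of the minimal resolution graph from Theorem \ref{4}(ii) then yields the claimed equality.
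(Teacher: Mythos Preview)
The paper does not prove this theorem. It is stated with the citation \cite{hussein} and no proof is given; the author is simply recalling Mourtada's result as background, exactly as was done for the analogous statements about $A_n$, $D_n$, $E_6$ and $E_8$. There is therefore nothing in the paper to compare your proposal against.

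Your outline is a plausible reconstruction of the kind of argument carried out in \cite{hussein}: stratify the jet space by vanishing orders, identify the strata whose leading form is a non-monomial initial form of $f$, and show these persist as distinct irreducible components while all other strata are absorbed. Your derivation of the threshold $m\geq 17$ from $o_{(9,6,4)}(f)=18$ is the right heuristic. However, what you have written is a sketch, not a proof: the step you yourself flag as ``the main obstacle'' --- showing that every non-essential weight pattern is absorbed into one of the seven distinguished closures --- is precisely the technical core of Mourtada's argument, and it requires a careful case-by-case analysis of the ideals $I_m^i$ rather than an appeal to the combinatorics of the fan refinement. The Gr\"obner-fan picture of Theorem~\ref{4} organizes the candidates but does not by itself establish the inclusions among jet-scheme strata; those inclusions live in the polynomial ring $\mathbb{C}[x_0,\ldots,x_m,y_0,\ldots,y_m,z_0,\ldots,z_m]$ and must be checked there.
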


\subsection{$E_8$-singularity}
Consider the hypersurface $X=V(f)\subset \mathbb{C}^3$ where $$f(x,y,z)=z^2+y^3+x^5$$ The Gröbner cones are $$C_{v_1}(f)=\langle (6,10,15) \rangle$$
$$C_{v_2}(f)=\langle (1,0,0),(6,10,15) \rangle$$
$$C_{v_3}(f)=\langle (0,0,1),(6,10,15) \rangle$$
$$C_{v_4}(f)=\langle (0,1,0),(6,10,15) \rangle$$
$$C_{v_5}(f)=\langle (1,0,0),(0,0,1),(6,10,15) \rangle$$
$$C_{v_6}(f)=\langle (0,1,0),(0,0,1),(6,10,15) \rangle$$
$$C_{v_7}(f)=\langle (1,0,0),(0,1,0),(6,10,15) \rangle$$ such that $In_{v_1}(f)=f$, $In_{v_2}(f)=z^2+y^3$, $In_{v_3}(f)=y^3+x^5$, $In_{v_4}(f)=z^2+x^5$, $In_{v_5}(f)=y^3$, $In_{v_6}(f)=x^5$ and $In_{v_7}(f)=z^2$. The Gröbner fan $G(X)$ of $X$ is the union $\cup_{i=1}^{7}C_{v_i}$.

\begin{figure}[H]
    \centering
    \includegraphics[width=0.9\linewidth]{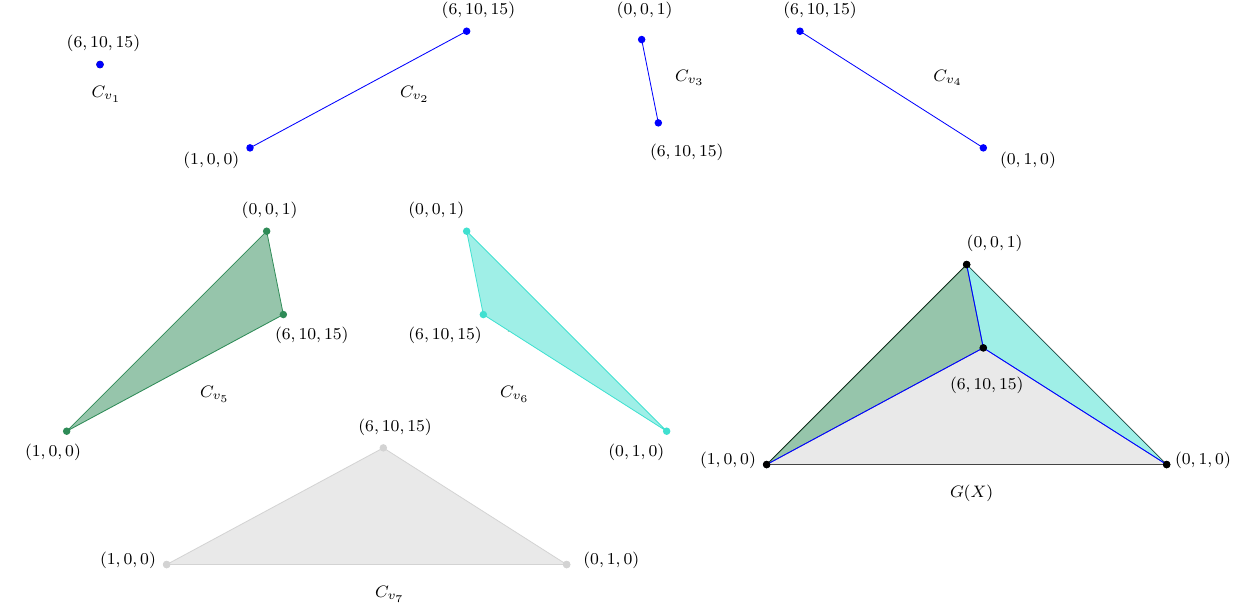}
    \caption{The Gröbner fan and its Gröbner cones for $E_8$-singularity}
\end{figure}

\begin{rem}
The Gröbner fan $G(X)$ of $X$ is same as its dual Newton polyhedron \cite{hc}.    
\end{rem}
\noindent For each maximal dimensional cones ($C_{v_{5}}$, $C_{v_{6}}$ and $C_{v_{7}}$) of $G(X)$, the profiles $p_{C_{v_{5}}}$, $p_{C_{v_{6}}}$ and  $p_{C_{v_{7}}}$ are given as 
\begin{center}
\includegraphics[scale=0.6]{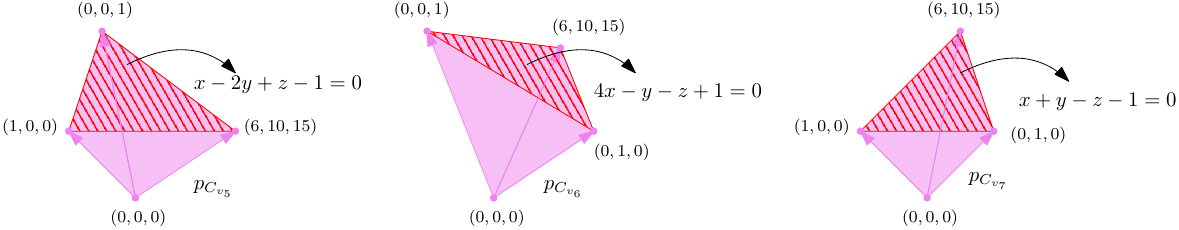}
\end{center}
The boundaries of $p_{C_{v_{5}}}$, $p_{C_{v_{6}}}$ and  $p_{C_{v_{7}}}$ are $H_1: x-2y+z-1=0$, $H_2: 4x-y-z+1=0$ and $H_3: x+y-z-1=0$ respectively.

\begin{thm}\label{5}
$i)$ For $E_8$-singularity, the elements of the set consisting of $\mathbb{Z}^3 \cap p_{C_{v_{i}}}$ give an embedded toric resolution of $X$ where $C_{v_i}$ is a maximal dimensional Gröbner cone in $G(X)$. Moreover these elements are irreducible means they are free over $\mathbb{Z}$.

$ii)$ For $E_8$-singularity, the elements on the skeleton of $G(X)$ give minimal resolution graph of the singularity.
\end{thm}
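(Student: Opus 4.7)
The plan is to follow exactly the same scheme as in Theorems \ref{1}, \ref{2a}, \ref{2b}, \ref{3}, \ref{4}, specialized to the data of $E_8$. Recall that the three maximal dimensional Gröbner cones of $G(X)$ are $C_{v_5},C_{v_6},C_{v_7}$, each of them is simplicial with the central ray $(6,10,15)$ as one extremal vector, and the three profiles $p_{C_{v_5}},p_{C_{v_6}},p_{C_{v_7}}$ are triangles lying on the hyperplanes $H_1,H_2,H_3$ described above.

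For part $(i)$, the first step is to enumerate $\mathbb{Z}^3 \cap p_{C_{v_i}}$ for $i=5,6,7$. Since each profile is a compact triangle on a fixed affine hyperplane $H_j$, the enumeration is reduced to a finite two-dimensional lattice point search: fix one variable and solve the equation of $H_j$ over the integers, then keep only those solutions which lie in the convex hull of the three extremal vectors. This yields three explicit finite sets; for instance the intersection point of the median line of $p_{C_{v_7}}$ with the lattice produces the chain leading up to $(6,10,15)$ along $H_3$. I would then declare these finite sets to be the new rays and verify they form a fan by checking that for any two listed rays their positive span is contained in one of the profiles, so the union of the subdivided $C_{v_5},C_{v_6},C_{v_7}$ is again the union $C_{v_5}\cup C_{v_6}\cup C_{v_7}$, together with the already-simplicial $C_{v_2},C_{v_3},C_{v_4}$.

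The second step is the regularity verification. Since each profile lies in a single hyperplane, every $2$-dimensional subcone of the refinement is spanned by two lattice vectors on the same $H_j$. Thus each $3$-dimensional subcone $\langle u_1,u_2,u_3\rangle$ with $u_1,u_2,u_3\in H_j$ has determinant equal (up to sign) to the two-dimensional lattice area of the triangle $u_1u_2u_3$ in $H_j$, and it suffices to check that each such triangle is an elementary triangle of the affine lattice on $H_j$. This I would do by direct $3\times 3$ determinant calculations for the finitely many triangles produced by the previous step. Irreducibility of the rays then follows because each listed vector is primitive and lies on the boundary $H_j$ at the minimum non-zero $H_j$-lattice level, so it cannot be written as a sum of two other vectors in the set.

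For part $(ii)$, I would read off the skeleton of the refined fan (disregarding the three coordinate rays) and compute, for each interior ray $v$, the sum $\sum v_i$ of its neighbours in the refinement; the goal is to show $\sum v_i = 2v$, so that the self-intersection number is $-2$. Once every vertex is proved to have self-intersection $-2$, the graph contains no $(-1)$-curve and therefore it is the minimal resolution graph; matching the combinatorial shape of the graph with the $E_8$ Dynkin diagram finishes the theorem. The main obstacle is the bookkeeping in the first step: the lattice point count on $H_3$ inside $p_{C_{v_7}}$ is appreciably larger than in the $E_6,E_7$ cases because $(6,10,15)$ is further from the two coordinate rays, and I will need to be careful to exhibit the subdivision explicitly so that each $3$-dimensional subcone is genuinely elementary and the $E_8$ adjacency (a chain of seven vertices with one branch) is reproduced.
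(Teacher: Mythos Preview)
Your proposal is correct and follows essentially the same approach as the paper: the paper's proof simply lists the lattice points in $\mathbb{Z}^3\cap p_{C_{v_i}}$ for $i=5,6,7$, displays the resulting regular refinement and resolution graph in figures, and asserts irreducibility and the $-2$ self-intersection of each vertex exactly as in the earlier $A_n$, $D_n$, $E_6$, $E_7$ cases. If anything, your outline is more explicit about the mechanics (the two-dimensional lattice search on each $H_j$, the $3\times3$ determinant checks for regularity, the neighbour-sum computation for self-intersections) than the paper, which relegates the verification to the figures.
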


\begin{proof}
$i)$ The cones $C_{v_5}$, $C_{v_6}$ and $C_{v_7}$ are the maximal dimensional cones in $G(X)$. The elements for $\mathbb{Z}^3 \cap p_{C_{v_{5}}}$ are 

$\bullet$ $(0,1,0),(0,0,1),(6,10,15),(2,2,3),(3,4,6),(4,6,9),(5,8,12),(1,1,2)$

$\bullet$ $(2,3,5),(3,5,8)$

The elements for $\mathbb{Z}^3 \cap p_{C_{v_{6}}}$ are

$\bullet$ $(0,1,0),(0,0,1),(6,10,15),(2,4,5),(4,7,10),(3,5,8),(1,2,3)$

The elements for $\mathbb{Z}^3 \cap p_{C_{v_{7}}}$ are

$\bullet$ $(1,0,0),(0,1,0),(6,10,15),(2,2,3),(3,4,6),(4,6,9),(5,8,12),(2,4,5),(4,7,10)$

$\bullet$ $(1,2,2),(2,3,4),(3,4,6),(3,5,7),(1,2,2),(2,3,4)$

\noindent A regular refinement of the Gröbner fan $G(X)$ of $X$ is
\begin{figure}[H]
    \centering
    \includegraphics[width=0.6\linewidth]{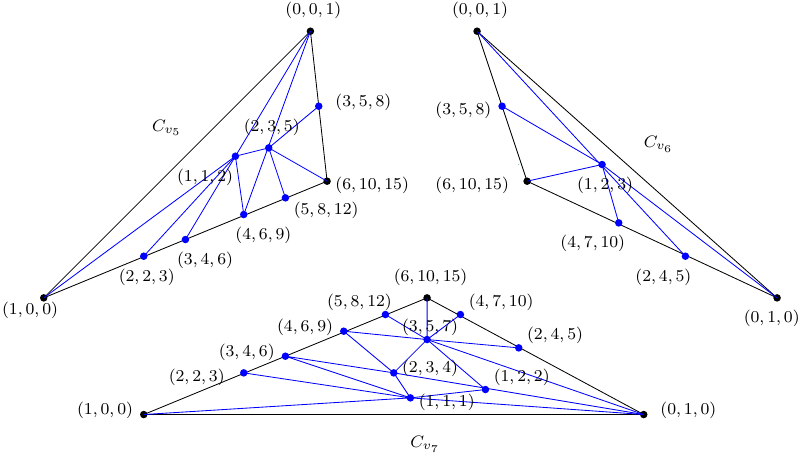}
    \caption{A regular refinement for $E_8$-singularity}
\end{figure}

\noindent Each element in the refinement is irreducible since it cannot be written as a sum of two other elements.

$ii)$ The elements on the skeleton of $G(X)$ gives 
\begin{figure}[H]
    \centering
    \includegraphics[width=0.6\linewidth]{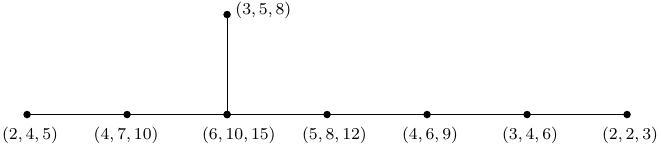}
    \caption{The resolution graph of $E_8$-singularity}
\end{figure}
\noindent The self-intersection of each vertex is $-2$ so it has no $-1$ curves. Hence it is minimal resolution graph.
\end{proof}

\begin{cor}
For $E_8$-singularity, the set of the vectors in the intersection of profile of the maximal dimensional Gröbner cones of $G(X)$ and $\mathbb{Z}^3$ are exactly same as the set of embedded valuations set of $X$ given in \cite{hussein}.
\end{cor}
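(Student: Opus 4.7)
The plan is to establish the equality of two explicit finite sets of primitive lattice vectors. On one hand we have $S_{\text{prof}} := \left(\mathbb{Z}^3\cap p_{C_{v_5}}\right) \cup \left(\mathbb{Z}^3\cap p_{C_{v_6}}\right) \cup \left(\mathbb{Z}^3\cap p_{C_{v_7}}\right)$, already enumerated inside the proof of Theorem \ref{5}; on the other hand we have the embedded valuation set $S_{\text{val}}$ of $X$ as computed in \cite{hussein} from the special irreducible components of the jet scheme $J_m(X)$ passing through $\Sing(X)$. Since both sides are finite, explicit, and specified by integer triples, the proof reduces to a direct combinatorial comparison.

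First, I would consolidate the lists in Theorem \ref{5} into one deduplicated set: after collecting the entries from the three profiles and discarding the repeats (for instance $(2,2,3),(3,4,6),(4,6,9),(5,8,12)$ appear in both $p_{C_{v_5}}$ and $p_{C_{v_7}}$, and $(1,2,2),(2,3,4),(3,4,6)$ are repeated inside the listing of $p_{C_{v_7}}$ itself), one obtains $S_{\text{prof}}$. To confirm that the stated enumeration is exhaustive rather than merely sufficient for a refinement, I would check that every lattice vector on one of the profile boundaries $H_1: x-2y+z-1=0$, $H_2: 4x-y-z+1=0$, $H_3: x+y-z-1=0$ lying inside the corresponding Gröbner cone appears in the list; this is an elementary bounded integer search since each profile boundary is a compact triangle after intersecting with the hyperplane $x+y+z=1$.

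Next, I would write down the embedded valuation vectors of $E_8$ as given in \cite{hussein}, each of which arises as the vector of $t$-adic orders $(\mathrm{ord}_t x,\mathrm{ord}_t y,\mathrm{ord}_t z)$ read off from the generic arc of one of the $8$ irreducible components of the $m$-jet scheme over $\Sing(X)$ (for $m\geq$ the threshold fixed in \cite{hussein}). These vectors are primitive by construction, and one may then place them side by side with the elements of $S_{\text{prof}}$ and verify the matching one triple at a time, which establishes $S_{\text{prof}}=S_{\text{val}}$.

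The main obstacle is purely clerical: the $E_8$ case is combinatorially the richest of the $ADE$ series, and careful bookkeeping is required to track which vectors belong to which profile, to eliminate the repetitions noted above, and to line up the resulting list against the valuation data of \cite{hussein}. No new geometric or algebraic input is needed once the two enumerations are aligned, and the argument parallels the analogous corollaries already proved for $A_n$, $D_n$ and $E_6$, $E_7$, so I expect the verification to be a routine tabulation rather than a conceptual step.
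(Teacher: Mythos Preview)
Your proposal is correct and is essentially the approach the paper takes: the paper does not supply a formal proof for this corollary (nor for the analogous corollaries in the $D_n$, $E_6$, $E_7$ cases), treating it instead as a direct combinatorial observation obtained by comparing the explicit list produced in Theorem~\ref{5} with the embedded valuation set recorded in \cite{hussein}; for the $A_n$ case the paper makes this explicit with the single sentence ``This fact can be seen combinatorially by looking at \cite{hussein, hc}.'' Your write-up is therefore more detailed than what the paper actually provides, but the underlying strategy---enumerate $S_{\text{prof}}$, read off $S_{\text{val}}$ from \cite{hussein}, and match triples---is the same.
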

\begin{cor}
 The embedded toric resolution that we construct is minimal.   
\end{cor}

\begin{proof}
By Theorem \ref{5}($i$), the elements give a resolution and these elements are irreducible. By Theorem \ref{5}($ii$), the resolution graph is minimal. Hence the embedded toric resolution is minimal.
\end{proof}

\begin{thm}
\cite{hussein} For $E_8$-singularity , when $m \geq 29$, in the $m$-jet scheme over the singular locus there are $eight$ irreducible components which is equal to the number of vertices of its minimal resolution graph.
\end{thm}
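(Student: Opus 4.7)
The plan is to prove this via direct analysis of the jet equations defining $J_m(X)$ for $X : z^2 + y^3 + x^5 = 0$, following the standard framework for $ADE$ jet schemes. First I would parametrize an $m$-jet as $x(t) = \sum_{i=0}^{m} x_i t^i$, $y(t) = \sum_{i=0}^{m} y_i t^i$, $z(t) = \sum_{i=0}^{m} z_i t^i$, and define $F^{(k)} \in \mathbb{C}[x_0,\ldots,z_m]$ as the coefficient of $t^k$ in $f(x(t),y(t),z(t))$. Then $J_m(X) = V(F^{(0)}, F^{(1)}, \ldots, F^{(m)})$. Since we care about components meeting the singular locus $\{x=y=z=0\}$, I restrict to the closed subscheme $\{x_0 = y_0 = z_0 = 0\}$, on which the equations $F^{(k)}$ simplify dramatically for small $k$ (they vanish identically when $k$ is below the appropriate weighted threshold).

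Second, I would exploit the fact that $f$ is weighted homogeneous of weight $30$ with $\operatorname{wt}(x,y,z) = (6,10,15)$, so the components of the restricted jet scheme are naturally stratified by the orders $a = \operatorname{ord}_t x(t)$, $b = \operatorname{ord}_t y(t)$, $c = \operatorname{ord}_t z(t)$. The constraint $f(\gamma(t)) \equiv 0 \pmod{t^{m+1}}$ translates to a system whose leading term behavior is governed by $\min(5a, 3b, 2c)$. I would enumerate the admissible triples $(a,b,c)$ with $1 \leq a,b,c$ and show that, for $m \geq 29$, exactly eight triples produce a locally closed stratum of maximal dimension inside $J_m(X)$; these should match precisely the eight valuation vectors that appear on the skeleton of the refined Gröbner fan computed in Theorem \ref{5}, namely the rays $(6,10,15)$, $(3,5,8)$, $(2,3,5)$, $(1,1,2)$ together with $(4,7,10)$, $(2,4,5)$, $(1,2,3)$ and one further ray from the refinement on $C_{v_7}$.

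Third, for each admissible triple I would verify irreducibility of the stratum by exhibiting it as a dense open subset of an affine space fibration over a torus, parametrized by the free coefficients $x_a, y_b, z_c$ (constrained only by the vanishing of the principal weighted-homogeneous piece of $f$) together with unconstrained higher-order coefficients. Then the containment combinatorics among strata — checking when one stratum lies in the closure of another — is resolved by comparing the weighted orders, and exactly eight strata are maximal. The threshold $m \geq 29$ is forced because at least one of the eight valuations (the deepest, corresponding to the central node associated with $(6,10,15)$) requires $f$ to vanish modulo $t^{30}$ in order to constrain the leading coefficient, so below this threshold two distinct maximal strata can collapse into one.

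The main obstacle will be the last step: showing that no further components appear and that the eight candidate strata are genuinely irreducible components rather than lower-dimensional subvarieties of larger strata. This requires a careful dimension count for each stratum, together with a proof that the equations $F^{(k)}$ with $k$ beyond the weight-$30$ range contribute only linear constraints on the remaining free variables. The organizing principle is that each of the eight vectors from the profile computation in Theorem \ref{5} provides a monomial valuation that detects precisely one maximal stratum, giving the bijection between components of $J_m(X)$ over $\operatorname{Sing}(X)$ and vertices of the minimal resolution graph of $E_8$.
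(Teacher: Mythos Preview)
The paper does not prove this theorem at all: it is stated with the citation \cite{hussein} and no argument is given. In this survey the result is simply quoted from Mourtada's paper on jet schemes of rational double points, so there is no ``paper's own proof'' to compare your proposal against.

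That said, your outline is broadly the strategy Mourtada actually uses in \cite{hussein}: parametrize $m$-jets, restrict to $x_0=y_0=z_0=0$, stratify by the $t$-orders $(a,b,c)$, and exploit the weighted homogeneity of $f$ with weights $(6,10,15)$ and total weight $30$. A few points in your sketch would need tightening before it constitutes a proof. First, the maximal strata are not indexed by triples $(a,b,c)$ satisfying a single $\min(5a,3b,2c)$ condition; the actual decomposition in \cite{hussein} proceeds by an inductive case analysis on which monomial attains the minimum and how the lower-order equations $F^{(k)}$ successively force further vanishing, and the bookkeeping is more delicate than a single enumeration. Second, your list of eight rays is left incomplete (``one further ray''), and the identification of jet components with rays on the skeleton is a corollary of the computation, not an input to it, so you cannot use Theorem~\ref{5} to guess the answer and then verify it without an independent argument that no other strata are maximal. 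Third, the threshold $m\geq 29$ does not arise only from the central vertex: it is the smallest $m$ for which \emph{all} eight strata have stabilized as distinct irreducible components, and justifying this requires checking the containment relations among closures at each intermediate $m$, not just the one attached to $(6,10,15)$.
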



\vskip.4cm

\noindent B\"{u}\c{s}ra KARADEN\.{I}Z \c SEN \\
e-mail: busrakaradeniz@gtu.edu.tr\\
Gebze Technical University \\ 
Department of Mathematics  \\
41400, Kocaeli, Turkey
  \\



\end{document}